 %%%%%%%%%%%%%%%%%%%%%%% file template.tex %%%%%%%%%%%%%%%%%%%%%%%%%
%
% This is a general template file for the LaTeX package SVJour3
% for Springer journals.          Springer Heidelberg 2010/09/16
%
% Copy it to a new file with a new name and use it as the basis
% for your article. Delete % signs as needed.
%
% This template includes a few options for different layouts and
% content for various journals. Please consult a previous issue of
% your journal as needed.
%
%%%%%%%%%%%%%%%%%%%%%%%%%%%%%%%%%%%%%%%%%%%%%%%%%%%%%%%%%%%%%%%%%%%
%
% First comes an example EPS file -- just ignore it and
% proceed on the \documentclass line
% your LaTeX will extract the file if required

%
%\documentclass{svjour3}                     % onecolumn (standard format)
%\documentclass[smallcondensed]{svjour3}     % onecolumn (ditto)
\documentclass[smallextended]{svjour3oma}       % onecolumn (second format)
\smartqed  % flush right qed marks, e.g. at end of proof
%
%\usepackage{graphicx}
%
% \usepackage{mathptmx}      % use Times fonts if available on your TeX system
%
% insert here the call for the packages your document requires
%\usepackage{latexsym}
% etc.
%

\usepackage{amssymb,amsmath}
\usepackage[T1]{fontenc}
\usepackage[utf8]{inputenc}
\usepackage[dvips]{graphicx}
\usepackage{psfrag}
\usepackage{enumerate}
\usepackage{stmaryrd}

\newcommand{\diam}[1]{\diamond_{#1}}
\newcommand{\s}[1]{\bar{#1}}

\newcommand{\anti}[1]{\overset{{}_{\shortleftarrow}}{#1}}
\newcommand{\alp}[1]{\tau(#1)}

\newcommand{\rwrule}{\longrightarrow}
\newcommand{\rewrite}{\rightarrow}
\newcommand{\conjugate}{\leadsto}
\newcommand{\derive}{\Rightarrow}

%\theoremstyle{definition}
%\newtheorem{defi}{Definition}

%\theoremstyle{plain}
%\newtheorem{theorem}[defi]{Theorem}

%\theoremstyle{plain}
%\newtheorem{lemma}[defi]{Lemma}

% please place your own definitions here and don't use \def but
% \newcommand{}{}
\newcommand{\Z}{\mathbb{Z}}
\newcommand{\N}{\mathbb{N}}
%
% Insert the name of "your journal" with
\journalname{myjournal}
\begin{document}

\title{Sub Rosa, a system of quasiperiodic rhombic substitution tilings with $n$-fold rotational symmetry
%Grants or other notes
%about the article that should go on the front page should be
%placed here. General acknowledgments should be placed at the end of the article.}
}
%\subtitle{Do you have a subtitle?\\ If so, write it here}

\titlerunning{Sub Rosa}        % if too long for running head

\author{\mbox{Jarkko Kari \and Markus Rissanen\thanks{This research was supported during 2012 by a grant from the Kone Foundation}}}

%\authorrunning{Kari, Rissanen} % if too long for running head

\institute{Jarkko Kari \at
              Department of Mathematics and Statistics, FI-20014 University of Turku, Finland \\
 %             Tel.: +123-45-678910\\
%              Fax: +123-45-678910\\
              \email{jkari@utu.fi}           %  \\
%             \emph{Present address:} of F. Author  %  if needed
           \and
           Markus Rissanen \at
             T\"o\"ol\"onkatu 11 A 130,
FI-00100 Helsinki, Finland\\
Tel: +358-40-7602020\\
\email{markus.rissanen@gmail.com}
}

\iffalse
\institute{Jarkko Kari \at
              Department of Mathematics and Statistics, FI-20014 University of Turku, Finland
              \email{jkari@utu.fi}           %  \\
%             \emph{Present address:} of F. Author  %  if needed
           \and
Markus Rissanen \at
T\"o\"ol\"onkatu 11 A 130,
FI-00100 Helsinki, Finland
%Tel: +358-40-7602020
%\email{markus.rissanen@gmail.com}
}
\fi

%\date{Received: date / Accepted: date}
% The correct dates will be entered by the editor

\maketitle

\begin{abstract}
In this paper we prove the existence of quasiperiodic rhombic substitution tilings with $2n$-fold rotational symmetry, for any $n$.
The tilings are edge-to-edge and use $\lfloor{\frac{n}{2}\rfloor}$ rhombic prototiles with unit length sides. We explicitly describe the
substitution rule for the edges of the rhombuses, and prove the existence of the corresponding tile substitutions by proving that the
interior can be tiled consistently with the given edge substitutions.

%We conjecture the
%existence of such tilings with larger values of $n$, and provide candidate scaling factors and edge substitution rules.
%For larger values of $n$ the scaling factors and rules for rhombus edge substitutions are provided.
\keywords{substitution tiling \and quasiperiodic \and rotation symmetry \and rhombic tiling}
% \PACS{PACS code1 \and PACS code2 \and more}
% \subclass{MSC code1 \and MSC code2 \and more}
\end{abstract}

\section{Introduction}
\label{intro}

A tiling is a covering of the infinite plane using copies of a finite number of different prototiles, without leaving gaps or letting the tiles overlap.
In this paper we consider edge-to-edge substitution tilings of the plane with rhombuses whose edges are of unit length.
The edge-to-edge condition means that two tiles in the tiling are either disjoint, have one common vertex or have one common edge.
A substitution rule tells how to replace enlarged tiles by patches of tiles. By iterating the process of inflating the tiles and substituting
the corresponding patches, one obtains a sequence whose limit is a tiling of the infinite plane. Such substitutions are
a popular way to produce non-periodic tilings. The Penrose tilings~\cite{Gardner,Penrose}
are the best known examples of this effect.

We say that a tiling has $n$-fold \emph{rotational symmetry\/} (with center $P$) if the tiling is invariant under the rotation around $P$ by the angle $\frac{2\pi}{n}$. Note that this requires a perfect global symmetry of the entire tiling.
There may exist an infinite number of centers of rotation for $n$-fold rotations only if $n=2,3,4$ or $6$. For all other values there can be at most one central point of
rotational symmetry. This fact is known as the \emph{crystallographic restriction}.

All our tilings are by rhombic prototiles with unit length sides.
As usual we shall define the substitution process in two steps. A tile is first enlarged by a constant called the \emph{scaling factor}. The enlarged tile is then replaced by a patch of prototiles that cover the same area as the enlarged tile does. This second step is called the \emph{substitution rule}. The boundaries of this new replaced tile do not have to go along the boundaries of the merely enlarged tile but their areas have to be equal. If there are some kind of ``dips'' inwards in the perimeter of the replaced tile also corresponding ``dips'' outwards have to exist and in the right places, so that no overlapping occurs along the edge of two neighbouring enlarged tiles. This condition
can be guaranteed by an associated \emph{edge substitution rule}.

Starting with any tile, the substitution process may be iterated to obtain higher order images.
The substitution is {\em primitive\/} if there exists $k$ such that the $k$'th order image of every tile contains a copy of every prototile.
All our substitutions will be primitive with $k=1$, so that each tile appears in the image of each tile.

A {\em substitution tiling\/} is any non-overlapping covering of the plane by the tiles
such that every finite patch of tiles present in the tiling appears in some higher order image of some tile. Such patches are
 called {\em legal}. In practice, we generate tilings by iterating the substitution from an initial patch of
tiles, positioning the obtained patches so that each generation properly contains the previous one in its interior,
and take the limit of the process. If the initial patch is such that it appears in a higher order image
of some tile then the obtained limit is clearly a substitution tiling.

A tiling is  {\em recurrent\/} if every finite patch of tiles that appears in the tiling appears infinitely many times in it.
It is {\em uniformly recurrent} (or {\em quasiperiodic}, or {\em repetitive\/}~\cite{baake2013aperiodic})
if every finite patch that appears somewhere in the tiling appears within distance $D$ of every point of the plane, for some $D$. The value $D$ may depend
on the patch. If $D$ can be bounded by a linear function of the diameter of the patch then the tiling is {\em linearly recurrent},
or {\em linearly repetitive\/}~\cite{baake2013aperiodic}. Tilings generated by primitive substitutions are
automatically uniformly recurrent. Indeed, any finite patch that appears in a tiling is legal, i.e., appears in some higher order image of some tile, and by primitivity then, in the $m$'th order
image of every tile, for some $m$. Radius $D$ can be taken to be the maximum diameter of the $m$'th order images of the tiles. A more careful analysis reveals that
our tilings are even linearly recurrent.

In this paper we are considering the problem of finding a primitive substitution that generates a tiling with $n$-fold rotational symmetry. Values $n=2,3,4$ and $6$ are trivial, and there are
even periodic solutions: The regular square tiling for $n=4$ and the case $n=6$ are considered below as the first member of the {\sc Sub Rosa} family.
The famous Penrose rhombuses provide a solution to the case $n=5$~\cite{Gardner,Grunbaum,Penrose}, and the Ammann-Beenker tiling for $n=8$ \cite{Beenker}. Recently, in~\cite{maloney} a computer algorithm was described to search
for solutions for arbitrary $n$. Many substitution rules were discovered for $n=5$ and $n=7$, but the computational complexity of the problem was reported to be prohibitive already in the case $n=11$.

We are not aware of known solutions for general $n$.
In~\cite{harriss} primitive rhombic substitutions were provided for all $n$
that can be iterated on an $n$-fold rotationally symmetric initial patch to obtain in the limit a tiling with $n$-fold symmetry. However, the initial patch does not appear in
any higher order image of any tile, so the obtained tiling is ``singular'' and not a substitution tiling according to the stricter definition of the present paper.
Moreover, the tiling is not recurrent, since the initial patch only appears once in the tiling. In~\cite{harriss} the term \emph{non-singular substitution tiling}
was used to describe the type of substitution tilings used in the present paper.

The main result is the following theorem.

\begin{theorem}
\label{maintheorem}
For every $n$, there exists a quasiperiodic rhombic substitution tiling with $2n$-fold rotational symmetry.
\end{theorem}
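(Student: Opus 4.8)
The plan is to split the construction into an edge-level part, a tile-level part, and finally a symmetry/limit argument. First I would fix the prototile set as the $\lfloor n/2\rfloor$ rhombi whose edges point in the $2n$ directions $e^{ik\pi/n}$ ($k=0,\dots,2n-1$) and whose opening angles are the multiples $\frac{j\pi}{n}$ for $1\le j\le\lfloor n/2\rfloor$ (the pairing $j\leftrightarrow n-j$ accounts for the count $\lfloor n/2\rfloor$). I would then write down an explicit edge substitution rule: a single oriented unit edge is replaced by a fixed word over these $2n$ directions, chosen so that (i) the vector sum of the replacement equals $\lambda$ times the original edge for a common scaling factor $\lambda$ (the \emph{closure} condition), and (ii) the word is invariant, as an unoriented decorated segment, under reversal of the edge orientation. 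Condition (i) pins down $\lambda$ as a specific algebraic number built from the quantities $\sin(j\pi/n)/\sin(\pi/n)$, and verifying it reduces to a root-of-unity identity. Condition (ii) is what forces two tiles sharing an edge to receive the \emph{same} subdivision of that edge from either side, so that the substituted patches of neighbouring enlarged tiles meet without gaps or overlaps; this realizes the ``dips in / dips out'' matching described in the introduction.

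The hard part is the tile substitution itself, exactly as flagged above: for each prototile I must show that the region bounded by the four edge-substituted (hence jagged) sides of the $\lambda$-enlarged rhombus can be partitioned, edge-to-edge, into copies of the prototiles. I would prove this constructively, exhibiting an explicit filling and, where possible, arguing inductively in $n$ or in the opening angle $j$. The natural strategy is to exploit the reflective symmetry of each substituted rhombus: split the interior along a diagonal into two mirror halves, tile a symmetric central core, and then fill the collar between the core and the jagged boundary, using the closure identity to certify that the indentations of the boundary and the leftover areas match exactly. I expect this to be the main obstacle, since it is a genuinely two-dimensional combinatorial–geometric claim: the edge rule controls only the boundary, and one must rule out any interior obstruction, simultaneously for every prototile. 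Comparing $\lambda^2$ times each prototile's area with the area enclosed by its substituted boundary gives a necessary bookkeeping check, but the real content is producing the fill and verifying edge-to-edge compatibility.

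Once a valid substitution is in hand the remaining steps are comparatively routine. I would read off primitivity with $k=1$ from the explicit fill, checking that the image of every prototile already contains a copy of each prototile; by the argument given in the introduction this makes every tiling generated by the substitution uniformly recurrent, i.e.\ quasiperiodic. Finally, to obtain \emph{exact} $2n$-fold rotational symmetry I would choose a seed patch that is itself $2n$-fold symmetric about a center $P$ and that reappears in the interior of its own image under one substitution step — for instance a rosette of $2n$ copies of the thinnest rhombus (opening angle $\frac{\pi}{n}$) meeting at $P$. Because the edge rule, and hence the tile substitution, commutes with the rotation by $\frac{\pi}{n}$ (that is, by $\frac{2\pi}{2n}$), each iterate is again $2n$-fold symmetric about $P$ and properly contains the previous one. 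The limit is therefore a substitution tiling, since the seed occurs in a higher-order image, and it is invariant under rotation by $\frac{2\pi}{2n}$, which is the required conclusion of Theorem~\ref{maintheorem}.
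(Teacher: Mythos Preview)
Your outline correctly identifies the architecture of the argument --- edge rule, tile rule, primitivity, symmetric seed --- and you are right that the tileability of the interior of each super-rhombus is the crux. But this is precisely where your proposal has a genuine gap: you offer only a heuristic (``split along a diagonal, tile a central core, fill a collar'') with no mechanism to carry it out, and you yourself flag this as the main obstacle. An explicit filling by hand is feasible for small $n$ but becomes intractable quickly; the area check is necessary but nowhere near sufficient; and there is no evident inductive structure in $n$ or in the opening angle that lets you build the filling for one super-rhombus from smaller ones. As written, the proposal is a plan that stops short of the one step that carries all the content.

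The paper sidesteps explicit construction entirely. It invokes a criterion of Kannan--Soroker and Kenyon: a region bounded by a simple closed unit-edge path is tileable by unit rhombi provided the boundary word satisfies a balance condition and a \emph{crossing condition} on the matching of antiparallel edges. Balance is immediate from the $u\anti{u}$ form of the boundary; the crossing condition is verified by projecting the boundary word onto each pair of directions $\{a,b,\anti{a},\anti{b}\}$ and showing the projection reduces to the empty word under a small rewrite system (Lemmas~\ref{lem:firstlemma}--\ref{lem:lemma3}). This turns a two-dimensional tiling problem into a one-dimensional word problem, dispatched by a finite case analysis (Section~\ref{sec:caseanalysis}). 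One subtlety your seed choice misses: the corner sectors of each super-rhombus come from the rose $R_2^1$ rather than the full $R_2$, and the paper notes that with the full rose the crossing condition would actually fail in the case $a=-\tfrac12$, $b=\tfrac12$; so the precise choice of seed and corner pattern is not cosmetic but forced by the proof.
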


\noindent
We start by setting the notations in Section~\ref{sec:notations}.
The discussion is then divided in two parts, depending on whether $n$ is odd or even.
In Section~\ref{sec:rules} we show, as examples, our substitutions for small odd $n$,
and discuss the scaling factors and the edge substitution rules for arbitrary odd $n$. Section~\ref{sec:even} provides
the analogous discussion for even $n$. In Section~\ref{sec:general} we provide the main proof.
We explicitly describe the edge substitution rule, and
express the boundary of the enlarged rhombus as a circular word of unit vectors.
In Section~\ref{sec:rewrite} we develop a rewrite system on the boundary word
to check the tileability  of the interior. In Section~\ref{sec:boundaryword} the
notation is set up for the complete case analysis that we do in Section~\ref{sec:caseanalysis}.

\section{Notations}
\label{sec:notations}

We use only rhombic tiles. Let $n\geq 2$ be a fixed integer. For any positive integers $x$ and $y$ that satisfy $x+y=n$, we denote
by pair $(x,y)$ the rhombus with unit length edges, and
angles $\frac{x\pi}{n}$ and $\frac{y\pi}{n}$. Pairs $(x,y)$ and $(y,x)$ are the same shape.
These $\lfloor \frac{n}{2}\rfloor$ \emph{unit rhombuses} are used in our $2n$-fold
symmetric solutions. We label the corners of the $(x,y)$ rhombus by $x$ and $y$. In a valid edge-to-edge tiling then, the labels of the corners meeting at a vertex have to sum up to $2n$.
The patch of tiles that is an image of a rhombus under our substitution is called a \emph{super-rhombus}.
%We call the mere unit edge a \emph{zero rhombus}.
%We say that an enlarged rhombus replaced with a patch of prototiles is a \emph{super-rhombus}.

%\subsection{Odd $n$}

%For odd values $n>1$ all angles of rhombuses are multiplies of angle $\frac{\pi}{n}$.
%Rhombuses that form the set of prototiles for odd $n$ are $(1,2)$ if $n=3$, $(1,4),(2,3)$ if $n=5$ and in general $(1,n-1),(2,n-2),\ldots,(\left\lfloor \frac{n}{2}\right\rfloor,\left\lceil %\frac{n}{2}\right\rceil)$. Note that $n=x+y$ holds for all odd $n$.

%For odd $n$
The system described here utilizes a rotational and reflection symmetric simple arrangement of rhombuses around a single point.
We present here two such patterns: The first one has
$n$-fold rotational symmetry, the second one $2n$-fold symmetry.

First we construct the smaller pattern where we assume $n\geq 3$. We place $n$ copies of $(2,n-2)$-rhombuses around a point with their $\frac{2\pi}{n}$ vertices
meeting at the point. This pattern is surrounded by a ring of $n$ copies of $(4,n-4)$-rhombuses. These properly match at the corners since $(n-2)+(n-2)+4$ equals $2n$.
This pattern is in turn surrounded by a ring of $(6,n-6)$ rhombuses, then by $(8,n-8)$ rhombuses, and so on.
This procedure is repeated until the outmost ring of $(n-1,1)$ or $(n-2,2)$ is reached, depending on whether $n$ is odd or even.
On each step, the angles add up properly at each vertex: on the $k$'th round the four meeting angles
have labels $2(k-1)$, $n-2k$, $n-2k$ and $2(k+1)$, and these add up to $2n$. Note that this analysis also holds on the first round $k=1$,
with the interpretation that the process is initialized with $n$ \emph{zero rhombuses}, i.e., mere edges.

The boundary of the obtained pattern is a regular $2n$-polygon with edges of length $1$, if $n$ is odd, and a
regular $n$-polygon with edges of length $2$ when $n$ is even.
The patterns for $n=3,4,\dots , 8$ are shown in Figure \ref{fig:rose1}.

\begin{figure}[htb]
\begin{center}
 \includegraphics[width=0.8\textwidth]{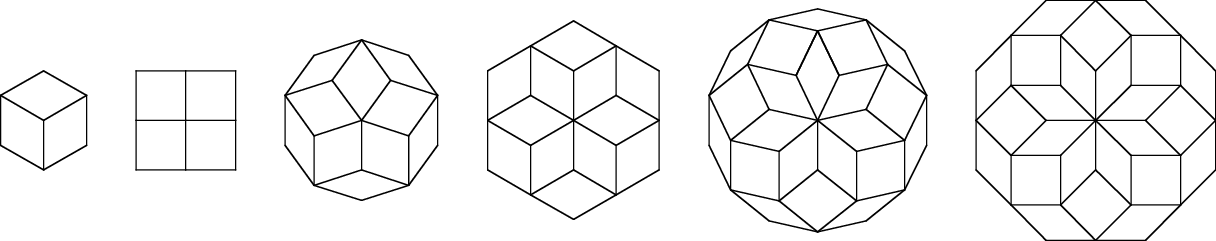}
\end{center}
\caption{Rose $R_1$ for $n=3,4,5,6,7,8$.}
\label{fig:rose1}
\end{figure}

Secondly we construct the larger pattern. Here $n\geq 2$. We place $2n$ copies of
$(1,n-1)$-rhombuses around a point with their $\frac{\pi}{n}$ vertices meeting at the point. This pattern is surrounded by a ring of $2n$
copies of $(2,n-2)$-rhombuses. Which in turn is surrounded by a ring of $2n$ copies of $(3,n-3)$-rhombuses. This procedure is repeated until we construct
the outmost ring using $(n-1,1)$ rhombuses. The result is a regular $2n$-polygon with edges of length $2$. Again, it is straightforward to verify that
the angles add up properly at each vertex. Also here we can imagine the  zeroth round to consist of
 $2n$ zero rhombuses (=unit edges) around the
central point.

Frequently we leave out a number of outer rings. For example, Figure \ref{fig:rose2} shows patterns for $n=2,3,\dots ,7$ where the last ring of $(n-1,1)$-rhombuses is omitted. Note that in the case $n=2$ the omission of the last ring leaves only the degenerate zeroth round.

\begin{figure}[htb]
\begin{center}
 \includegraphics[width=\textwidth]{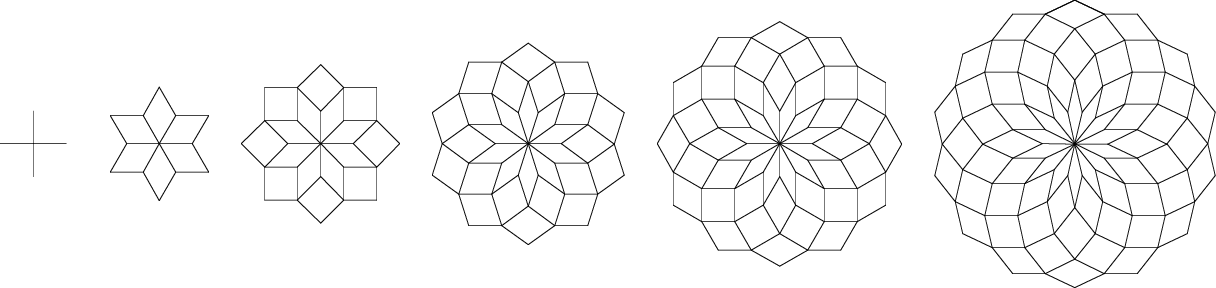}
\end{center}
\caption{Rose $R_2^1$ for $n=2,3,4,5,6,7$.}
\label{fig:rose2}
\end{figure}

These patterns greatly resemble a flower with its petals and therefore we  call them \emph{roses} and denote them by $R_a^b$, where $a\in\{1,2\}$ denotes the type of the rose,
and $b\in\N$ denotes number of missing rings. That is, the patterns shown in Figure~\ref{fig:rose1} are $R_1^0$, or simply $R_1$,
and patterns in Figure~\ref{fig:rose2}  are $R_2^1$. Roses $R_1^b$
have $n$-fold rotational symmetry while roses $R_2^b$ have $2n$-fold rotational symmetry.
%Roses of both types are valid patterns. Indeed, for roses $R_1$ the first ring fits into the initial pattern since there are two $\frac{(n-2)\pi}{n}$ angles and one $\frac{4\pi}{n}$ totalling $2\pi$. %For the ring of $(2k,n-2k)$-rhombuses, where $k\in\N$, vertices $n-2k$, $2(k-1)$, $n-2k$ and $2(k+1)$ meet totalling for $2\pi$. Similarly roses $R_2^b$ can be shown to be valid patterns.
Note that $R_1$ for even $n$ only uses tiles $(x,y)$ with even $x$ and $y$.
In fact, $R_2$ for any $n$ is identical to $R_1$ for $2n$.
For $n\geq 6$ it is possible to construct even larger, a third type of rose-pattern with $2n$-fold rotational symmetry. Examples of such larger type are in Figures~\ref{fig:sub7} and \ref{fig:sub12}, and if completed, the result is a regular $2n$-polygon with edges of length 4. However, this, or $R_1$, or any another possible new type of rose is not necessary to prove our main theorem. Instead,
roses $R_2^1$ play a central role in the constructions.

The tilings discussed in this paper use substitutions and rose like patterns, the system is hence called {\sc Sub Rosa}.
We first consider separately the cases of odd $n$ and even $n$, starting with the odd cases.

\section{Scaling factors and substitution rules for odd $n$}
\label{sec:rules}
For $n\in\N\setminus\{0\}$ we define $S(n)$ to be the scaling factor of the substitution. For odd $n$,
\begin{equation}
\label{eq:scaling}
S(n)=\cos(\frac{\pi}{2n})/\sin^2(\frac{\pi}{2n}).
\end{equation}

\subsection{Substitution rule for $n=3$}
\label{subsec:3}

%Since the tile sets of Sub Rosa tilings are rhombuses, they are not aperiodic, {\em i.e}., t

For $n=3$ the {\sc Sub Rosa} tiling is periodic. For the sake of completeness we will first examine the tiling for value $n=3$ before moving to the non-periodic tilings starting with $n=5$.
For $n=3$ there is only one rhombus in the tile set, namely $(1,2)$. The substitution rule is shown on the left in Figure \ref{fig:sub3}. The enlarged rhombus is depicted with thick edges and is replaced by
$12$ rhombuses $(1,2)$. The edges of the enlarged rhombus bisect
two unit rhombuses out of which one is counted in while the other one is counted out.
The resulting tiling after one step of substitution on the starting pattern $R_2^1$ is seen on the right in Figure \ref{fig:sub3}.

\begin{figure}[htb]
 \begin{center}
  \includegraphics[width=0.8\textwidth]{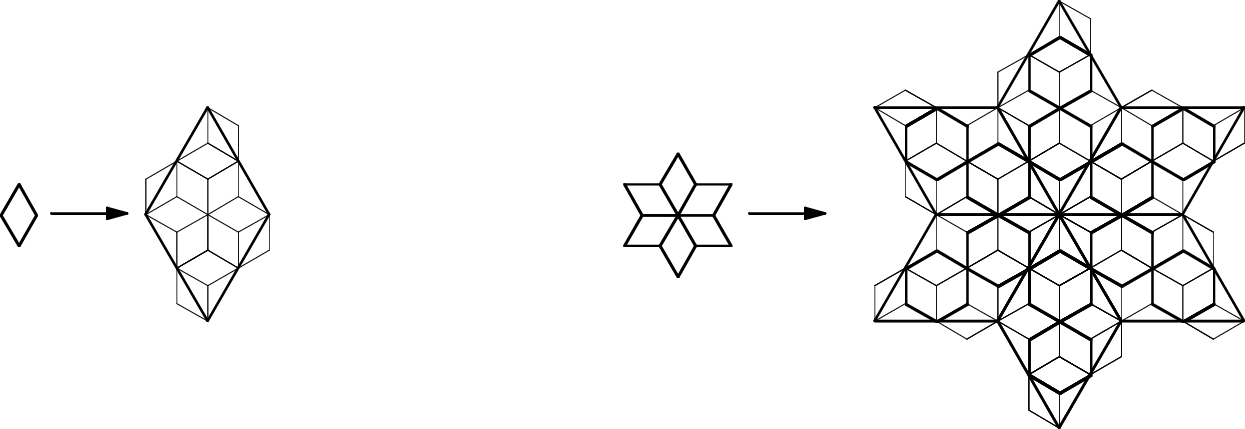}
 \end{center}
\caption{Substitution rule for $n=3$, and the rose $R_2^1$ for $n=3$ after one substitution.}
\label{fig:sub3}
\end{figure}

The following aspects are easy to see in the case $n=3$, but they also hold for the larger values of odd $n$.
\begin{itemize}
\item \emph{The edge substitution rule\/}: All edges of enlarged rhombuses bisect an identical sequence of unit rhombuses.
The sequence has even length and it is mirror symmetric. If we represent each bisected unit rhombus as the label
of its bisected angle, the edge substitution rule can be written down as the sequence $\Sigma(n)$
    of these labels. For example, the edge substitution rule for the case $n=3$ is
    represented by the sequence $\Sigma(3)=1,1$ since the edge of an enlarged rhombus cuts two unit rhombuses along their long diagonals, bisecting their angles of label $1$. The edge substitution rules for larger values of odd $n$ are shown in Table~\ref{tab:composition}.
    % for odd $n$ and  in Table~\ref{tab:composition2} for even $n$.
\item The edge substitution rule above guarantees that the super-rhombuses match each other without gaps or overlaps, so that tile substitutions are consistent. We simply include in the
super-rhombus half of the unit rhombuses that are bisected by its edge. More precisely, when following the edges of the enlarged rhombus clockwise, we count in the first half of the
bisected unit tiles of each edge and count out the second half. In this way each unit rhombus gets included exactly once since each orientation of an edge between two rhombuses is
clockwise for one of the incident rhombuses and counterclockwise for the other.
\item In an edge-to-edge tiling by super-rhombuses there are roses $R_2^1$ centered at all corners of the super-rhombuses.
In other words, a super-rhombus has a sector of $R_2^1$ centered at each corner, and these four sectors together form the full $R_2^1$ rose.
Combined with the fact that there is at least one unit rhombus vertex completely in the interior of each super-rhombus, this implies that the second order image of each tile contains pattern $R_2^1$. Also, since $R_2^1$ contains a copy of every prototile in our system, the patch substituted for each tile contains a copy of every prototile and the substitution is then primitive.
\item We iterate the substitution from the starting pattern $R_2^1$. By the point above, the image of $R_2^1$ has $R_2^1$ at its center,
so we can align the centers of consecutive generations and take the limit to obtain a substitution tiling, {\sc Sub Rosa}.
Strictly speaking, for odd $n$, the central
rose $R_2^1$
quarter turns in each generation. The roses at even generations and odd generations properly align with each other,
but between consecutive generations a quarter turn is required. The generated tiling is a fixed point of the second iterate of the substitution.
\item In all our drawings the depicted unit rhombuses,
in fact, come with an isometry $\varphi$ that maps a rhombic prototile $T$ into the
given position. Because the rhombuses have the dihedral $D_2$ symmetry group ($D_4$ in the case of a square),
the image $\varphi(T)$ alone does not contain the full information about $\varphi$. To identify $\varphi$
uniquely, one may consider the unit rhombuses in our illustrations {\em oriented}.
All tiles are positively oriented, i.e., they come with an even isometry. The orientations in the
starting pattern $R_2^1$ are invariant under the $2n$-fold rotational symmetry of the pattern,
and the sectors of $R_2^1$ at the corners of super-rhombuses are oriented correspondingly to
guarantee the proper orientations in the roses $R_2^1$ that are formed around the vertices of the super-rhombuses.

Note that the orientation of tiles becomes irrelevant if the super-rhombuses have the same symmetries as the
corresponding unit rhombus: in this case the substitution is deterministic even without knowing the orientations.
We argue in Remark~\ref{rem:argue} in Section~\ref{sec:rewrite}
that our substitutions can always be made with the necessary symmetries.
\item We always iterate {\sc Sub Rosa} from the start pattern $R_2^1$.  As patch
$R_2^1$ has $2n$-fold rotational symmetry, the obtained tiling has that symmetry as well.
As the substitution is primitive and the start pattern $R_2^1$ appears in the second order image of each tile, the
tiling is uniformly recurrent.

\iffalse
\item We always iterate {\sc Sub Rosa} from start pattern $R_2^1$. As patch
$R_2^1$ has $2n$-fold rotational symmetry, we can guarantee the tiling to have this same symmetry. For this, and to guarantee uniform recurrence, we have to
make the substitution process {\em deterministic\/} by considering the unit rhombuses oriented. This only
has significance if the patch to substitute a unit rhombus is not as symmetric
as the rhombus, so that there is a choice of how to orient the patch
during the substitution.
%Such a situation can be seen, for example,
%in Figure~\ref{fig:bigrose8} where the patch to substitute a square does not have the
%4-fold rotational symmetry that the square has.
This choice leads to non-determinism that can be resolved by considering all unit rhombuses and substitution rules oriented. On the other hand,
if the super-rhombus has all
the same symmetries as the rhombus it replaces then determinism is guaranteed
automatically as all substitution choices lead to the same outcome. Note that
this requires the dihedral $D_2$ symmetry of the super-rhombus, but even the
higher $D_4$ symmetry in the case the rhombus is a square. We argue in Remark~\ref{rem:argue} in Section~\ref{sec:rewrite}
that our substitutions can always be made with the required symmetries. But, as discussed above, this is not necessary for
Theorem~\ref{maintheorem} to hold since we can always determinize the substitution process by orienting the rhombuses.
Once the substitution is deterministic it is clear that the $2n$-fold rotational symmetry of the initial patch is preserved by the substitution, so the resulting tiling is also $2n$-fold symmetric. Also uniform recurrence of the tiling follows from the determinism.
\fi
\end{itemize}

\begin{figure}[htb]
\begin{center}
 \includegraphics[width=0.9\textwidth]{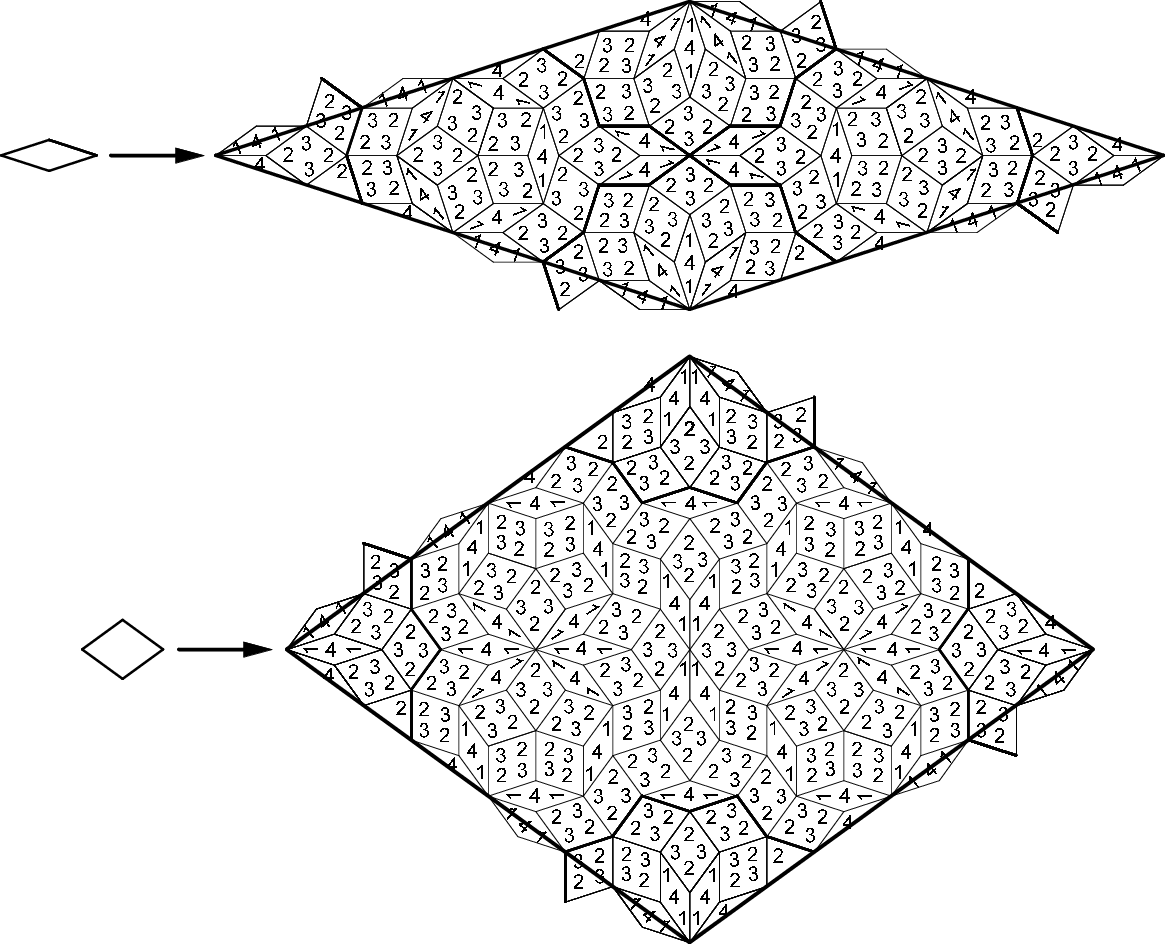}
\end{center}
\caption{Substitution rule for $n=5$.}
\label{fig:sub5}
\end{figure}

%\begin{figure}[htb]
% \includegraphics[scale=0.25]{subrosa3.eps}
%\caption{Rose $R_2^1$ for $n=3$ after one substitution.}
%\label{fig:subrosa3}
%\end{figure}

\subsection{Substitution rules for $n=5$}
\label{subsec:5}

For $n=5$ the tile set consists of two rhombuses, $(1,4)$ and $(2,3)$. The substitution rule is shown in Figure \ref{fig:sub5}.
In the figure each integer label represents a multiple of $\frac{\pi}{5}$. It is easy to see that each meeting point of vertices sums up to 10 or in other words, is a full circle $2\pi$.
All six points listed above for $n=3$ apply also here, as they do for all odd $n$. Figure \ref{fig:bigrose5} shows the first image of rose $R_2^1$.
Note, again, roses $R_2^1$ centered at all corners of the super-rhombuses.

%Similarly to the notation of rhombuses, super-rhombuses are denoted by $S(x,y)$.
%The super-rhombus consists of enlarged rhombus with its internal structure composed of unit rhombuses.
%Figure \ref{fig:bigrose5} rose of type 2 is made using the super-rhombuses $S(1,4)$ and $S(2,3)$.

\begin{figure}[hptb]
\begin{center}
 \includegraphics[width=0.9\textwidth]{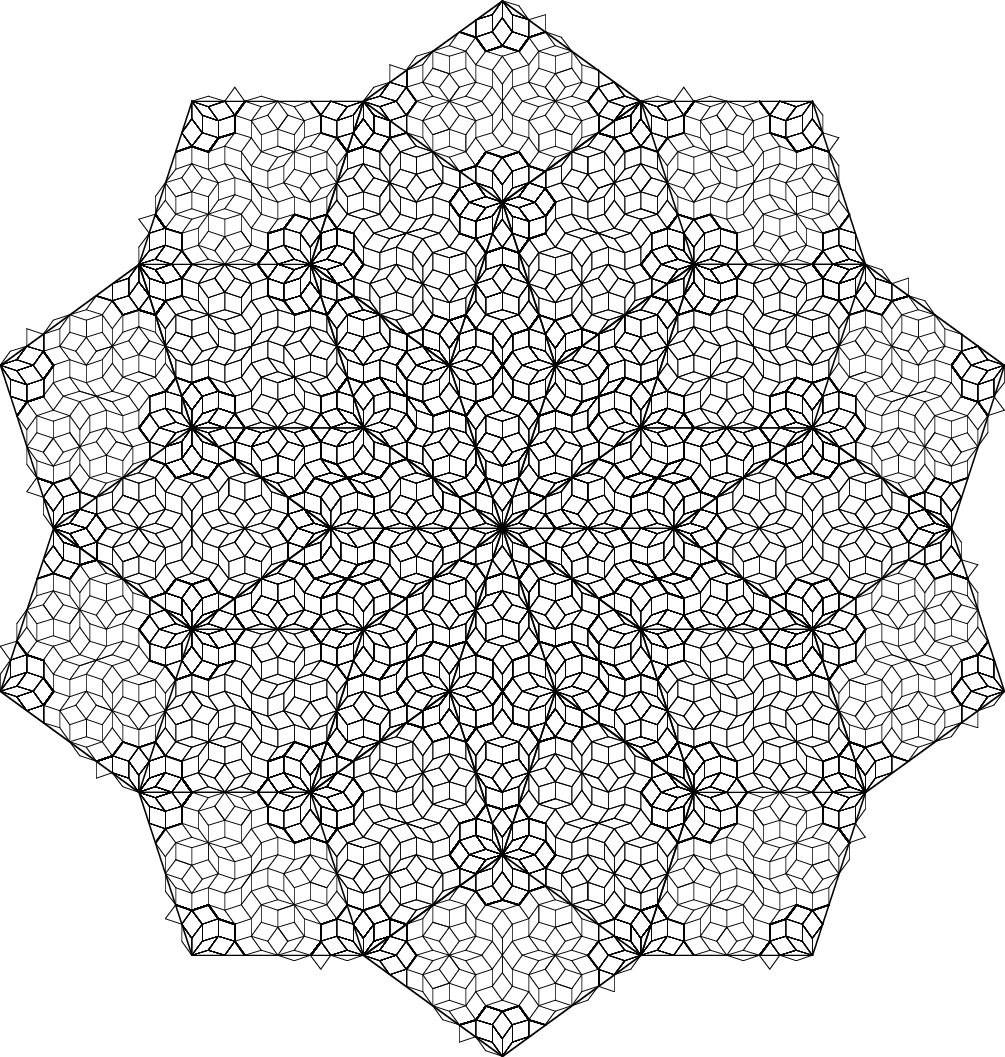}
\end{center}
\caption{Rose $R_2^1$ for $n=5$ after one substitution.}
\label{fig:bigrose5}
\end{figure}

\subsection{Substitution rules for $n=7$}
%Apart from the edges, the super-rhombus is symmetric when mirrored along one of the diagonals.
%Thus we present substitution rules for one of the quadrants for $n=7$.

The substitution rule for $n=7$ is shown in the Figure \ref{fig:sub7}. Similarly to case of $n=5$, it is easy to verify that at each intersection of rhombuses, the sum of the angles is $2\pi$.

\begin{figure}[htb]
\begin{center}
 \includegraphics[width=0.99\textwidth,]{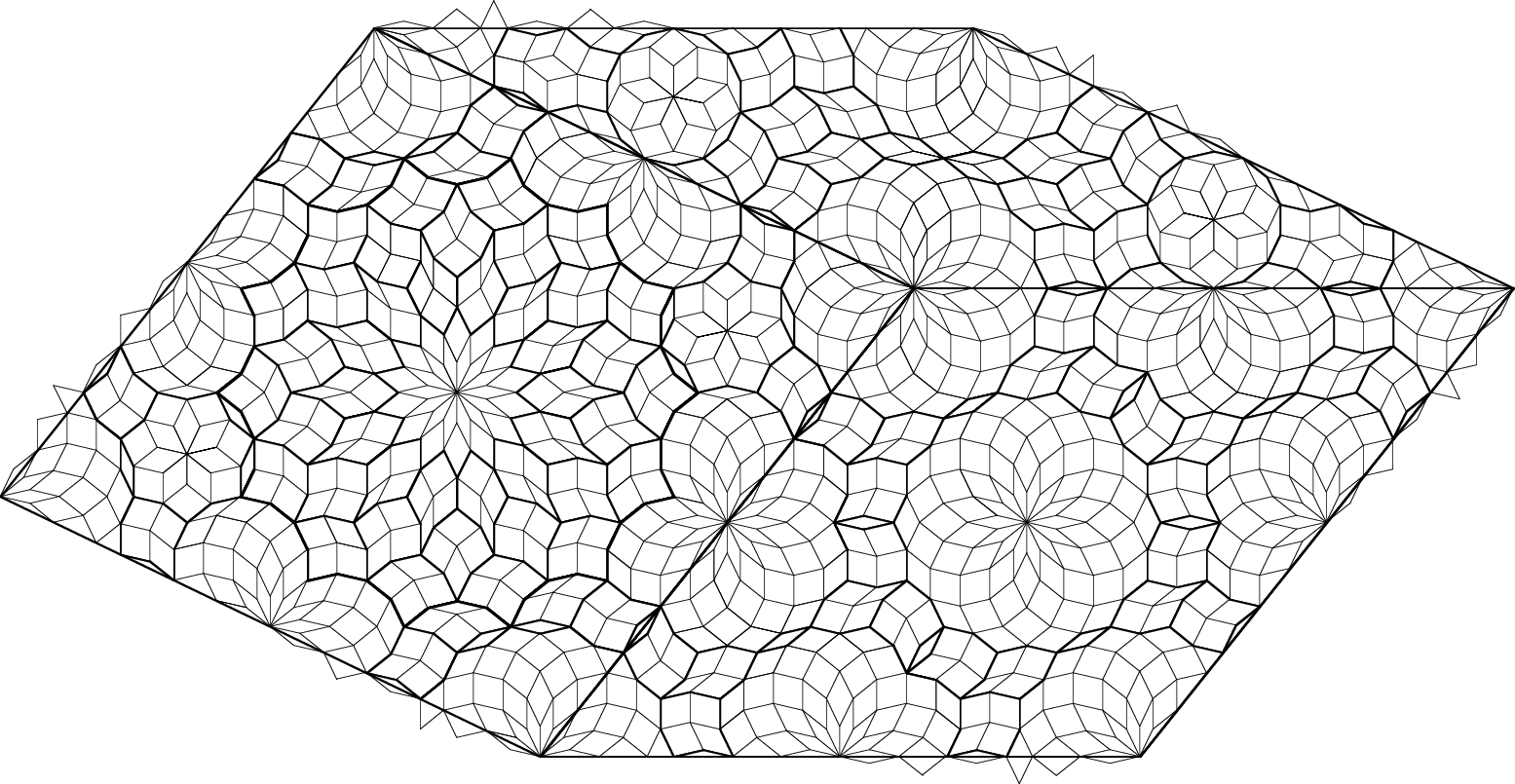}
\end{center}
\caption{Super-rhombuses of substitution rule for $n=7$.}
\label{fig:sub7}
\end{figure}

\iffalse

\subsection{Substitution rules for $n=9$}

\begin{figure}[hptb]
\begin{center}
 \includegraphics[width=0.8\textwidth]{sub9a.eps}
\end{center}
\caption{Quadrants of super-rhombuses of substitution rule for $n=9$.}
\label{fig:sub9}
\end{figure}

For $n=9$ the substitution rule for the quadrants of rhombuses is shown in the Figure \ref{fig:sub9}.

\subsection{Substitution rules for $n=11$}

\begin{figure}[hptb]
\begin{center}
 \includegraphics[width=0.8\textwidth]{sub11a.eps}
\end{center}
\caption{Quadrants of super-rhombuses of substitution rule for $n=11$.}
\label{fig:sub11}
\end{figure}

For $n=11$ the substitution rule for the quadrants of rhombuses is shown in the Figure \ref{fig:sub11}.

\fi

\subsection{Compositions of super-rhombus' edges}

Every {\sc Sub Rosa} tiling is formed in such a
way that the edge of the super-rhombus bisects all unit rhombuses along it. The length of this edge is $S(n)$, the scaling factor of the substitution.

Recall that we identify the edge substitution rule by the sequence $\Sigma(n)$ of the labels of the angles the edge bisects.
In Table \ref{tab:composition} is depicted the sequence $\Sigma(n)$ for small odd $n$, where symbol $\mid$ denotes the middle point of the edge.
%Note that in each super-rhombus there is the rose $R_1$ touching the edge at this middle point.
The underlined values in the table represent rhombuses which are inside roses $R^1_2$ centered at the corners of the super-rhombus.
There is a simple rule to form $\Sigma(n)$: The first half of  $\Sigma(n)$ consists of the (underlined) sequence
$1,3,5,\dots ,(n-2)$, followed by the
mirror images of the underlined parts of $\Sigma(3), \Sigma(5), \dots , \Sigma(n-2)$, that is, by $1$, $31$, $531$, etc.
The second half of $\Sigma(n)$ is the mirror image of its first half. This blueprint provides the edge substitution rule for all odd $n$.

\begin{table}[htb]
\caption{Composition of super-rhombus' edges for first $n$ given in unit rhombuses.}
\label{tab:composition}
\begin{tabular}{cc}
\hline\noalign{\smallskip}
%first & second & third  \\
%\noalign{\smallskip}\hline\noalign{\smallskip}
$\Sigma(3)$ & $\underline{1}\mid\underline{1}$ \\
$\Sigma(5)$ & \underline{1-3}-$1\mid1$-\underline{3-1} \\
$\Sigma(7)$ & \underline{1-3-5}-1-3-$1\mid1$-3-1-\underline{5-3-1} \\
$\Sigma(9)$ & \underline{1-3-5-7}-1-3-1-5-3-$1\mid1$-3-5-1-3-1-\underline{7-5-3-1} \\
$\Sigma(11)$ & \underline{1-3-5-7-9}-1-3-1-5-3-1-7-5-3-$1\mid
1$-3-5-7-1-3-5-1-3-1-\underline{9-7-5-3-1} \\
\noalign{\smallskip}\hline
\end{tabular}
\end{table}

We choose all the rhombuses on the left side of the midpoint to be counted in
and all rhombuses on the right side of midpoint to be counted out from the super-rhombus. Any other partition would work as well, as long as it is mirror symmetric.

\emph{Diagonal measure} is the length of the diagonal of a unit rhombus $(k,n-k)$ that bisects $k$, and is denoted by $d_n(k)$. The diagonal measure is given by the formula
\begin{equation}
\label{eq:diagonal}
d_n(k)=2\cos(k\frac{\pi}{2n}).
\end{equation}
%If $n$ is clear from the context, we denote diagonal measure by $d(k)$.
From $\Sigma(n)$ we can read the scaling
factor $S(n)$ as a sum of diagonal measures $d_n(k)$. For example,
from $\Sigma(7)=1,3,5,1,3,1,1,3,1,5,3,1$ we obtain that
$$
S(7)=2(d_7(1)+d_7(3)+d_7(5)+d_7(1)+d_7(3)+d_7(1)).
$$
From the proposed general structure of $\Sigma(n)$ for all odd $n$, we get
$$S(n)=(n-1)d_n(1)+(n-3)d_n(3)+\ldots+2d_n(n-2).$$
%This can be easily verified to hold true for all odd $n$ with the values of $S(n)$ and $d_n(k)$ given by
%formulas (\ref{eq:scaling}) and (\ref{eq:diagonal}), respectively. In fact,
The scaling factor in
(\ref{eq:scaling}) was inferred from this relation to diagonal measures (\ref{eq:diagonal}).

\section{Substitution rules for even $n$}
\label{sec:even}
In this section we consider briefly {\sc Sub Rosa} tilings for even values of $n$. Now the scaling factor is different
\begin{equation}
\label{eq:scale2}
S(n)=
%\cos^2(\frac{\pi}{2n})/\sin^2(\frac{\pi}{2n})+1.
 \frac{2}{1-\cos(\frac{\pi}{n})}.
\end{equation}
%Also the starting arrangement is $R_2^0$ instead of $R_2^1$.
%Recall that we assume $n\geq 3$. In fact,
%for $n=2$ the scaling factor given by (\ref{eq:scale2}) would be 1 and no substitution tiling is hence produced.
The smallest case $n=2$ has scaling factor $2$: it is the square substitution where the square is replaced by four squares. The resulting tiling
is the regular square tiling, which is also the only
edge-to-edge rhombic tiling in this case.

Consider an arbitrary even $n\geq 2$.

\begin{itemize}
\item  \emph{The edge substitution rule\/}: The edges of enlarged rhombuses bisect some unit rhombuses and coincide with the edges of some unit rhombuses. In the latter case we say the
edge bisects a zero rhombus, and indicate such situation by label $0$ in $\Sigma(n)$. The sequence $\Sigma(n)$ is mirror symmetric and of even length.
This means that, as in the odd case,
the super-rhombuses match each other without gaps or overlaps if we count in the super-rhombus the bisected tiles in the first half of $\Sigma(n)$,
and count out the bisected tiles in the second half of $\Sigma(n)$. The edge substitution rules for small even $n$ are shown in Table~\ref{tab:composition2}.
    % for odd $n$ and  in Table~\ref{tab:composition2} for even $n$.
\item The starting pattern is again $R_2^1$. All super-rhombuses have a sector of $R_2^1$ at each vertex, but now the sector is
aligned so that the first label of $\Sigma(n)$ is $0$ rather than $1$. This means that the image of $R_2^1$ contains at its
center $R_2^1$ in its original orientation. In other words, no quarter rotation is needed to match consecutive generations, and the
final tiling is a fixed point of the substitution.
\item The general structure of
$\Sigma(n)$ for even $n$ is analogous to the odd case:
The first half of $\Sigma(n)$ consists of the (underlined) sequence
$0,2,4,\dots ,(n-2)$, followed by  the
mirror images of the underlined parts of $\Sigma(2)$, $\Sigma(4), \dots , \Sigma(n-2)$, that is, by $0$, $20$, $420$, etc. The second half of
 $\Sigma(n)$ is the reversal of the first half.
The obtained word is a palindrome of even length.
\end{itemize}

\begin{table}[htb]
\caption{Composition of super-rhombus' edges for first even $n$.}
\label{tab:composition2}
\begin{tabular}{cc}
\hline\noalign{\smallskip}
%first & second & third  \\
%\noalign{\smallskip}\hline\noalign{\smallskip}
$\Sigma(2)$ & \underline{0}$\mid$\underline{0} \\
$\Sigma(4)$ & \underline{0-2}-0$\mid$0-\underline{2-0} \\
$\Sigma(6)$ & \underline{0-2-4}-0-2-0$\mid$0-2-0-\underline{4-2-0} \\
$\Sigma(8)$ & \underline{0-2-4-6}-0-2-0-4-2-0$\mid$0-2-4-0-2-0-\underline{6-4-2-0} \\
$\Sigma(10)$ & \underline{0-2-4-6-8}-0-2-0-4-2-0-6-4-2-0$\mid$0-2
4-6-0-2-4-0-2-0-\underline{8-6-4-2-0} \\
$\Sigma(12)$ & \underline{0-2-4-6-8-10}-0-2-0-4-2-0-6-4-2-0-8-6-4-2-0$\mid$0-2-4-6-8-0-2-4-6-0-2-4-0-2-0-\underline{10-8-6-4-2-0} \\
\noalign{\smallskip}\hline
\end{tabular}
\end{table}

Rose $R_2^1$ for $n=4$ after one substitution can be seen in Figure \ref{fig:bigrose8}.
For $n=6$ the substitution rule is shown in the Figure \ref{fig:sub12}.

\begin{figure}[htb]
\begin{center}
 \includegraphics[width=0.8\textwidth]{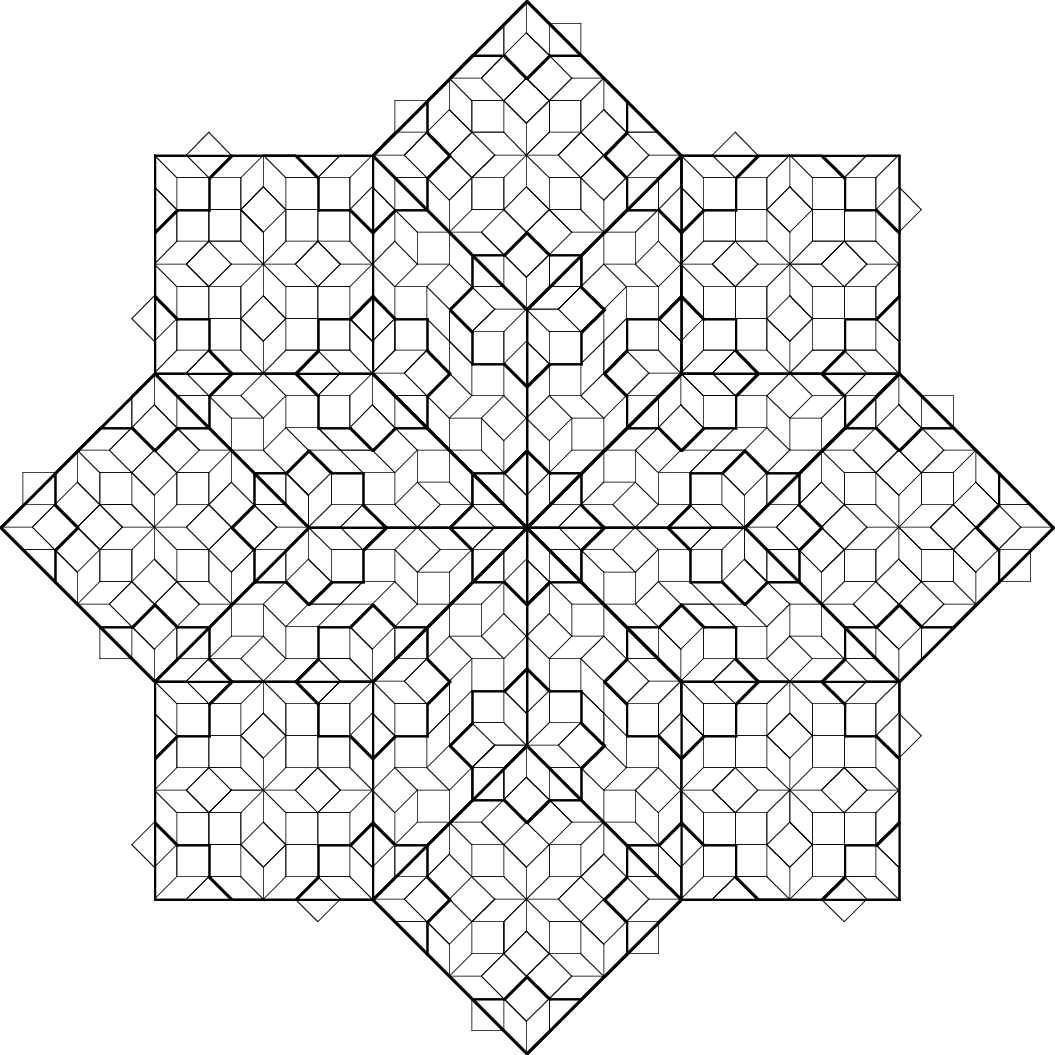}
\end{center}
\caption{Rose $R_2^1$ for $n=4$ after one substitution.}
\label{fig:bigrose8}
\end{figure}

\begin{figure}[htb]
\begin{center}
\includegraphics[width=0.99\textwidth]{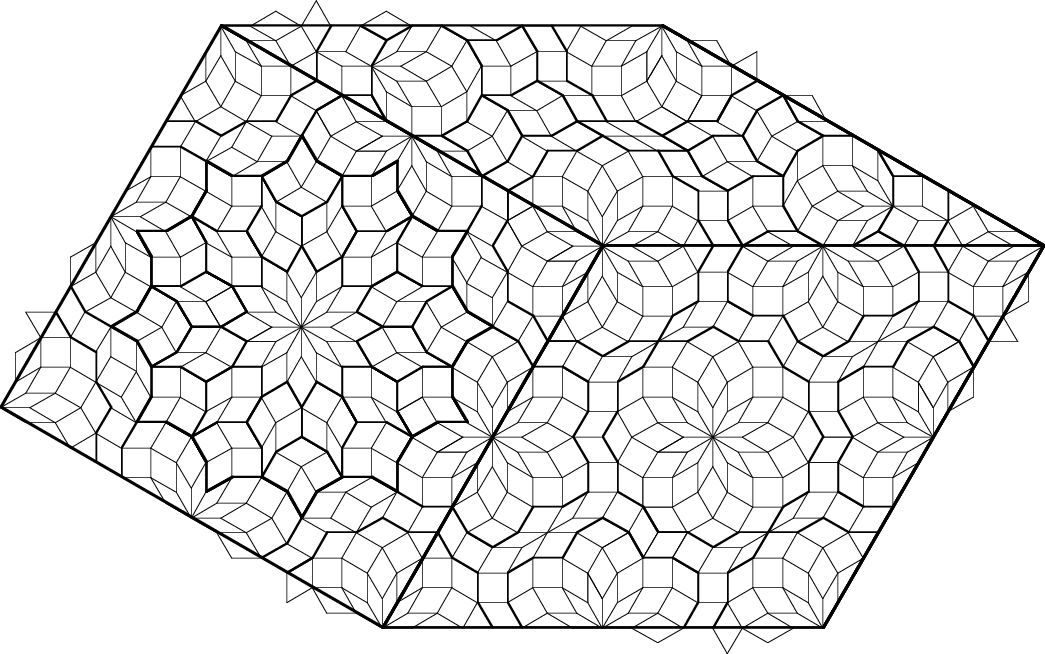}
\end{center}
\caption{Super-rhombuses of the substitution rule for $n=6$.}
\label{fig:sub12}
\end{figure}

\section{General case}
\label{sec:general}

We have shown substitution rules for small values of $n$. In the following we demonstrate that analogous substitutions
exist for all values of $n$, thus proving Theorem~\ref{maintheorem}.
We use the explicit description of super-rhombuses' boundaries as in
Tables~\ref{tab:composition} and \ref{tab:composition2}. Then we use the method in~\cite{kannan,kenyon}
to prove that the interior can be tiled with the unit rhombuses.

We first set our notations. Let $n$ be fixed. All angles will be expressed in units $\frac{\pi}{n}$. Number $2n$ is the full
circle so that angles are considered modulo $2n$.
Direction $0$ is drawn horizontally to the right, so that directions $\frac{n}{2}$, $n$ and $-\frac{n}{2}$ refer to up, left and down, respectively.
For each direction $x$, the antiparallel direction $x+n$ is denoted  by $\anti{x}$. Hence  $\anti{\anti{x}}=x$.

In Tables~\ref{tab:composition} and \ref{tab:composition2}
the structure of super-rhombuses' edges is given in terms of the unit rhombuses
along edges. In the following it is more convenient to consider the sequence of unit vectors that enclose the
interior that needs to be tiled. Each unit vector is represented by its direction, so the \emph{boundary word} of
the super-rhombus is the sequence of directions of the unit vectors on the boundary.
This sequence is a cyclic word so that conjugate words
$uv$ and $vu$ denote the same boundary, just with a different starting point.
We read the boundary word counterclockwise. The unit rhombuses bisected by the super-edge are not included in
interior to be tiled. This means that a unit rhombus $(a,n-a)$ on a super-edge of direction $k$
contributes in the boundary word two unit vectors in directions $k+\frac{a}{2}$ and $k-\frac{a}{2}$, in this order.

In fact, we want to show that the interior can be tiled in such a way that sectors of rose  $R_2^1$ appear centered at all
four corners of the super-rhombus. The underlined symbols in
Tables~\ref{tab:composition} and \ref{tab:composition2} are inside these roses so they get replaced by the boundary
of the rose sector. As an example,
Figure~\ref{fig:boundary} shows the boundary of the region to be tiled in the case of the $(2,3)$ super-rhombus. Note how the boundary traces the sectors
of the $R_2^1$ roses at the corners.

\begin{figure}[hptb]
\begin{center}
 \includegraphics[width=0.6\textwidth]{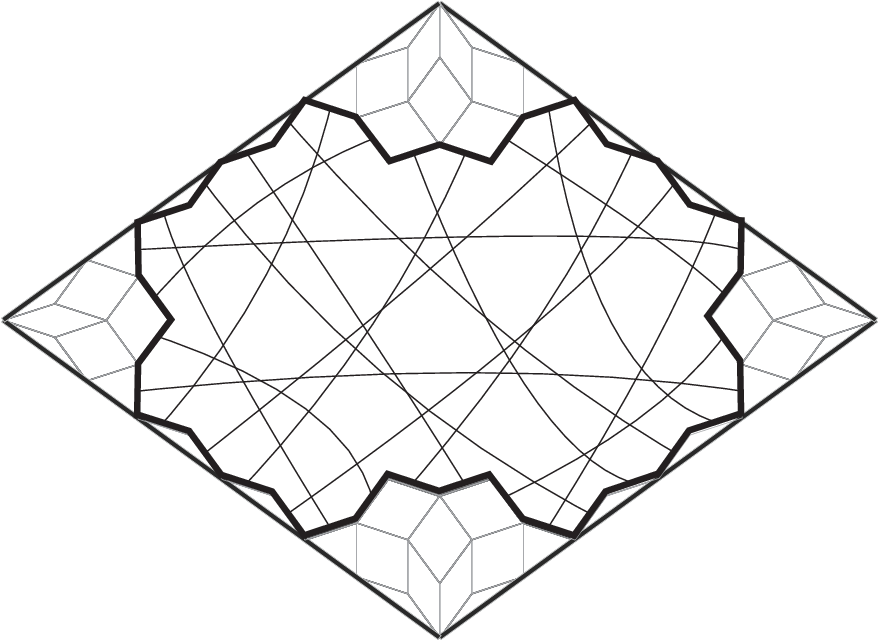}
 \end{center}
\caption{The boundary of the region
to be tiled with unit rhombuses, in the case $n=5$ and super-rhombus $(2,3)$. Lines connect the matching
pairs of unit vectors along the boundary. Each crossing of the lines corresponds to a unit rhombus in the interior. In this case, each crossing provides a properly oriented rhombus which means that the crossing condition is satisfied.}
\label{fig:boundary}
\end{figure}

%A path of unit vectors is represented as the word whose letters are the directions of the vectors.
We use the standard notations on words.
If $u$ and $v$ are two words then $uv$ is their concatenation. A concatenation of $k$ copies of word $u$
is denoted as $u^k$. Though individual letters $a$ are directions, and hence numbers modulo $n$, the power
notation $a^k$ represents the word $aa\dots a$ of length $k$ (rather than number $a$ to power $k$). We denote by $u^R$ the \emph{reversal} of word $u$,
that is, the word obtained by writing the letters of $u$ in the reverse order. Word $u$ is a \emph{palindrome} if $u^R=u$.
The empty word is denoted as $\varepsilon$. It has length zero.
Sometimes, for clarity, we write words with commas separating the letters. So $aabab$ and $a,a,b,a,b$ denote the same word of length five.

For any direction $x$, we define the operation $\sigma_x$ on words that increments each letter by constant $x$.
This corresponds to turning the entire path by angle $x$. In particular, $\sigma_n$ orients the path in the
opposite direction. We extend the notation of antiparallelism to words so that $\anti{u}=\sigma_n(u)$
denotes the path antiparallel to $u$, that is, half turned $u$.

Since the same palindromic edge substitution is used on all four
edges of a super-rhombus, it easily follows that the contributions
on the boundary word
by opposite super-edges  are antiparallel to each other, and the same holds for the contributions by
the rose sectors at opposite corners of the super-rhombus. Hence the boundary word is of the form $u\anti{u}$
where $u$ is the word through half of the boundary.
It follows from this symmetry that,
in each direction $a$, the boundary contains equally many unit
vectors in directions $a$ and $\anti{a}$. This is the {\em balance condition\/} in~\cite{kannan}.

It is also easy to see that the boundary does not cross itself.  Only in the case of
the $(1,n-1)$ super-rhombus for odd $n$  the roses at the opposite $n-1$ corners touch in the middle, but without
crossing each other. In the terminology of~\cite{kannan} the  boundary is {\em simple}.

Based on~\cite{kannan}, to prove that the interior can
be tiled with the unit rhombuses it only remains to show
that the boundary vectors can be {\em matched\/}
in an appropriate way. Each letter $a$ on the boundary word must be matched with
an occurrence of the antiparallel letter $\anti{a}$. A \emph{crossing} is formed by two matched pairs $a \frown \anti{a}$
and $b \frown \anti{b}$ if they occur in the circular boundary word in the interleaved order
$\dots a \dots b\dots \anti{a}\dots \anti{b}\dots$. For each such crossing, it is required that
the path $ab\anti{a}\anti{b}$ forms a rhombus in the counterclockwise direction, that is,
$a<b<a+n \pmod{2n}$.
This is the \emph{crossing condition}.

The convex crossing condition used in~\cite{kannan} is slightly weaker as it also allows
crossings between matched pairs $a \frown \anti{a}$ and $a \frown \anti{a}$ with the
same labels. We prefer to forbid this because we then get unique matchings:
In our setup, all occurrences of $a$ and $\anti{a}$ are in the
opposite halves of the boundary word.
This implies that there is a unique way of matching the occurrences of
$a$ and $\anti{a}$ with each other without crossings. Indeed, the $i$'th
occurrence of $a$ must be paired with the $i$'th last occurrence of
$\anti{a}$.

What needs to be checked is that this unique matching respects the crossing condition
for all distinct directions $a$ and $b$. This in mind, for any $a$ and $b$
we define the \emph{projection} function $\pi_{a,b}$ that erases from a boundary word
all letters except $a,b,\anti{a}$ and $\anti{b}$. The crossing condition of the boundary word $u$
can be expressed
equivalently as the requirement that all projections $\pi_{a,b}(u)$ satisfy the crossing condition.
This turns out to be equivalent in our setup to the property that $\pi_{a,b}(u)$ defines a non-crossing
cycle in the counterclockwise direction.

\begin{example}
\label{ex:boundary23}
Consider the boundary word of the $(2,3)$ rhombus in Figure~\ref{fig:boundary}. Keeping the orientation of the figure, the directions of the unit vectors are from $\Z+\frac{1}{2}$.
%(If we had rotated the figure by angle $\frac{1}{2}$, all directions would be integers.)
Reading counterclockwise, starting where the leftmost rose segment ends, the boundary word is
$$
\begin{array}{l}
\frac{-1}{2}, \frac{-3}{2},\frac{-1}{2}, \frac{-3}{2}\ \vline\
\frac{1}{2}, \frac{3}{2},\frac{-1}{2}, \frac{1}{2}, \frac{-3}{2}, \frac{-1}{2}\ \vline\
\frac{3}{2}, \frac{1}{2},\frac{3}{2}, \frac{1}{2}\ \vline\
\frac{5}{2}, \frac{7}{2},\frac{3}{2}, \frac{5}{2}\ \vline\ \\ \\
\hspace*{5mm}
\frac{9}{2}, \frac{7}{2},\frac{9}{2}, \frac{7}{2}\ \vline\
\frac{-9}{2}, \frac{-7}{2},\frac{9}{2}, \frac{-9}{2}, \frac{7}{2}, \frac{9}{2}\ \vline\
\frac{-7}{2}, \frac{-9}{2},\frac{-7}{2}, \frac{-9}{2}\ \vline\
\frac{-5}{2}, \frac{-3}{2},\frac{-7}{2}, \frac{-5}{2}\ \vline\ \\
\end{array}
$$
Vertical lines indicate the changes between the four rose segments and the four edge segments of the boundary.
There are 5 different directions (when antiparallel directions are not counted separately), so there are ${5 \choose 2} =10$
different pairs of directions that define projections $\pi_{a,b}$.
For example, for $a=\frac{3}{2}$ and $b=\frac{5}{2}$, keeping in mind that
$\anti{a}=\frac{-7}{2}$ and $\anti{b}=\frac{-5}{2}$,
the projection by $\pi_{a,b}$ is
$$
a\ a\ a\ b\ a\ b\ \anti{a}\ \anti{a}\ \anti{a}\ \anti{b}\ \anti{a}\ \anti{b}.
$$
This word defines a non-crossing boundary in the counterclockwise direction. Analogously we can determine the projections
for all pairs $a$ and $b$. The corresponding paths are shown in
Figure~\ref{fig:projections}. The crossing condition is satisfied.
\qed

\begin{figure}[hptb]
\begin{center}
 \includegraphics[width=0.95\textwidth]{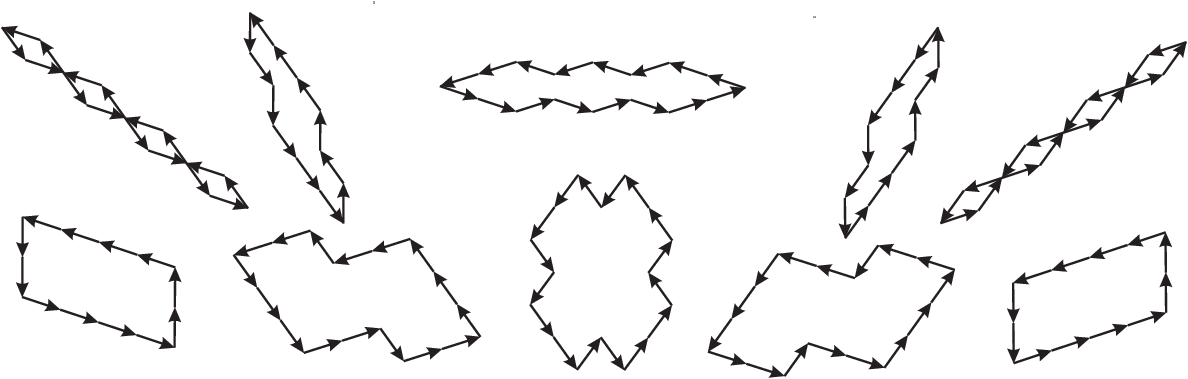}
 \end{center}
\caption{All projections in pairs of distinct directions of the boundary of the $(2,3)$ super-rhombus. All paths are nonintersecting
and counterclockwise, so the crossing condition is satisfied.}
\label{fig:projections}
\end{figure}

\end{example}

The results in~\cite{kannan,kenyon} guarantee that a region surrounded by a simple boundary
that satisfies the balance condition and whose (unique) matching satisfies the crossing condition
has a tiling by parallelograms (Theorem 2 in~\cite{kannan}). In our setup the only non-trivial aspect to check is the crossing condition. In the following section we develop a convenient rewrite system to check this condition.

\begin{remark}
While~\cite{kannan} guarantees a tiling by parallelograms, it is easy to see that the proof in \cite{kannan} provides a tiling by unit rhombuses if the boundary consists of unit length segments.
\end{remark}

\subsection{Rewrite system to check the crossing condition}
\label{sec:rewrite}

Our standing assumption in the following is that $a$ and $b$ are directions such that $a<b<a+n \pmod{2n}$ so that
$ab\anti{a}\anti{b}$ is a proper rhombus in the counterclockwise direction. Let $u,v$ be words over the alphabet
$\{a,b,\anti{a},\anti{b}\}$.
Consider the following eight \emph{rewrite} rules:
\begin{equation}
\label{eq:rewrite}
\begin{array}{rclcrclcrclcrclc}
ab & \rwrule & ba, &\hspace*{5mm}&  b\anti{a} &\rwrule & \anti{a}b, &\hspace*{5mm} & \anti{a}\anti{b} &\rwrule &
\anti{b}\anti{a}, &\hspace*{5mm} & \anti{b}a &\rwrule & a\anti{b},\\
a\anti{a} &\rwrule &\varepsilon, && \anti{a}a & \rwrule & \varepsilon, &&
b\anti{b} &\rwrule &\varepsilon, && \anti{b}b & \rwrule & \varepsilon,
\end{array}
\end{equation}
An application of rule $x\rwrule y$ on word $u$ means that we replace an occurrence of subword $x$ in $u$ by $y$.
More precisely, if $u=u_1xu_2$ and $v=v_1yv_2$ and $x\rwrule y$ is a rewrite rule then
$u$ derives $v$ and we write $u\rewrite v$.
We denote the transitive, reflexive closure of $\rewrite$ by $\rewrite^*$, so that $u\rewrite^* v$ means that $u$ can be turned into $v$
by a sequence of rewrites. Iterating rule $ab \rwrule  ba$, for example, allows us to move $b$'s
any number of positions to the left on a word containing only $a$'s and $b$'s, or to move
$a$'s to the right on such a word. In other words, $au \rewrite^* ua$ and $ub \rewrite^* bu$ for any word $u$ that only contains letters $a$ and $b$.

The top four rewrite rules in (\ref{eq:rewrite}) correspond to snapping
two consecutive edges of the rhombus $ab\anti{a}\anti{b}$ in reverse order.
The last four rules allow eliminating a letter and its reversal that are  next to each other.

Suppose $u\rewrite v$ using the first rewrite rule $ab\rwrule ba$. If $v$ satisfies the crossing
condition, so does $u$ with the same matchings. Indeed, the
only new crossing in $u$, not present in $v$,
connects pairs
$a \frown \anti{a}$ and $b \frown \anti{b}$
of symbols that appear in $u$ in the (circular)
order $\dots ab\dots \anti{a}\dots \anti{b}\dots$. Such order is allowed by the crossing condition.
The same argument applies to all four rewrite rules that reverse the order of two symbols.

Suppose then that  $u\rewrite v$ using the eliminating rewrite rule $a\anti{a}\rwrule \varepsilon$.
If $v$ satisfies the crossing
condition, so does $u$ when we connect the two letters in the eliminated pair $a\anti{a}$ with each other,
and match all other letters in the same way as they were matched in $v$.
In this way $u$ has exactly the same crossings as $v$.
The same argument applies to any other eliminating rewrite rule.

We have seen that if $u\rewrite^* v$ and if $v$ satisfies the crossing condition then also $u$ satisfies the crossing condition.
In particular, $u\rewrite^* \varepsilon$ guarantees that $u$ satisfies the crossing condition. In fact, it is not difficult to see that this condition is
also necessary:

\begin{lemma}
\label{lem:firstlemma}
Let $u$ be a word over the alphabet $\{a,b,\anti{a},\anti{b}\}$. Then $u$ has a matching that satisfies the crossing condition if and only if
$u\rewrite^* \varepsilon$.
\end{lemma}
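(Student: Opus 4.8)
The direction $u\rewrite^*\varepsilon \Rightarrow$ (crossing condition holds) is already established in the discussion preceding the lemma: $\varepsilon$ vacuously satisfies the crossing condition, and each rewrite preserves the condition backwards. The plan is therefore to prove the converse, namely that a word $u$ carrying a matching $M$ that satisfies the crossing condition must satisfy $u\rewrite^*\varepsilon$. I would argue by induction on the pair $(|u|,\kappa)$ ordered lexicographically, where $\kappa$ is the number of crossings of $M$; the base case $u=\varepsilon$ is immediate.

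For the inductive step I consider an \emph{innermost} matched pair, that is, a pair whose endpoints, at positions $i<j$, minimize $j-i$. By minimality, every letter strictly between positions $i$ and $j$ is matched to a letter outside the interval $[i,j]$. Write $x$ for the letter at position $i$, so that $\anti{x}$ sits at position $j$. If $j=i+1$ the pair is adjacent, $x\anti{x}$ is one of $a\anti{a},\anti{a}a,b\anti{b},\anti{b}b$, and I apply the corresponding eliminating rule; the restriction of $M$ to the resulting shorter word still satisfies the crossing condition, so the induction hypothesis gives $u\rewrite u'\rewrite^*\varepsilon$.

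The crux is the case $j\ge i+2$. Let $y$ be the letter at position $i+1$; its partner $\anti{y}$ lies outside $[i,j]$, so the pairs $\{x,\anti{x}\}$ and $\{y,\anti{y}\}$ cross. Since the crossing condition forbids crossings between two pairs carrying the same label, we must have $y\ne\anti{x}$. Distinguishing whether $\anti{y}$ lies to the left of $i$ or to the right of $j$, and running through the four possibilities for $x$, a short case analysis shows that the crossing condition forces $(x,y)$ to be one of the pairs $ab,\ b\anti{a},\ \anti{a}\anti{b},\ \anti{b}a$, which are exactly the left-hand sides of the order-reversing rewrite rules. Hence I may apply $xy\rwrule yx$. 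This swap leaves $M$ unchanged as a matching but removes exactly the crossing between $\{x,\anti{x}\}$ and $\{y,\anti{y}\}$ (turning those two chords from crossing into nested or disjoint), while affecting no other crossing; the resulting word $u'$ has the same length and $\kappa-1$ crossings, so the induction hypothesis yields $u'\rewrite^*\varepsilon$ and therefore $u\rewrite^*\varepsilon$.

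The main obstacle is the case analysis in the last paragraph. I expect it to reduce to the observation that the four legal crossing patterns correspond precisely to the four cyclically consecutive pairs among the directions ordered as $a<b<\anti{a}<\anti{b}\pmod{2n}$, and that these are exactly the patterns appearing on the left of the reordering rules; the prohibition of same-label crossings is what eliminates the degenerate possibility $y=\anti{x}$ and thereby guarantees that an order-reversing rule always applies to the innermost pair.
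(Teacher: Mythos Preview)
Your proof is correct and follows essentially the same strategy as the paper: induct on the lexicographic pair (length, number of crossings), and at each step locate two consecutive letters where a rewrite rule applies. The only cosmetic difference is how the rewrite position is found---the paper assumes no consecutive pair is matched or crossing and derives a contradiction via a minimal cyclic-distance argument, whereas you go directly to an innermost matched pair and its neighbour; both arguments produce the same adjacent $xy$ and the same conclusion that the crossing condition forces $xy$ to be a left-hand side of a reordering rule.
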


\begin{proof}
We have seen above that $u\rewrite^* \varepsilon$ implies that there is matching in $u$ that satisfies the crossing condition.
Let us prove the converse direction. Define a partial order among words with matchings: $u<v$ if $u$ is shorter than $v$, or if
$u$ and $v$ have the same lengths but $u$ has fewer crossings than $v$.

Let $u$ be a non-empty word with a matching that satisfies the crossing condition. We want to prove that $u\rewrite v$ for some $v<u$
that also satisfies the matching condition.

Assume first that $u$ contains two consecutive letters that are matched with each other. Hence the letters are
antiparallel to each other, and can be eliminated by a rewrite rule. This produces a shorter word $v$, so $v<u$. The remaining
letters in $v$ can be matched exactly as in $u$, satisfying the crossing condition.

%antiparallel to each other, say $a\anti{a}$. Eliminating $a\anti{a}$
%will produce $v$ that is shorter than $u$. If the eliminated letters were connected to each other then the remaining letters in $v$
%can be matched exactly as they were matched in $u$, hence satisfying the  matching condition also in $v$.
%If the eliminated letters were not connected to each other then they were connected to some other occurrences of $a$ and $\anti{a}$ in $u$.
%Now in $v$ we connected these other occurrences to each other, while keeping the other connections of $u$ unchanged. This cannot introduce
%any incorrect crossings, as any crossing with the new connection is similar to a crossing that was already present in $u$.

Assume next that $u$ contains two consecutive letters $xy$ whose connections cross each other. By the crossing condition, $xy\anti{x}\anti{y}$
must be a proper rhombus in the counterclockwise direction, so $xy\rwrule yx$ is a valid rewrite rule. When we apply it to $u$, and keep
letter matchings unchanged,
we obtain $v$ that has the same length but one less crossing than $u$. Hence $u\rewrite v$ and $v<u$. Clearly $v$ satisfies the crossing condition since we obtained it from $u$ by only untangling one crossing.

Finally, assume that $u$ has no consecutive letters that are connected to each other or whose connections would cross each other.
Let $x$ and $y$ be two letters in $u$  that are either connected to each other or whose connections cross, and assume their (cyclic) distance $d$
is as short as possible. The letters are not next to each other so there is a letter $z$ between them. But $z$ cannot be connected to another letter
$\anti{z}$ between $x$ and $y$ because then the distance from $z$ to $\anti{z}$ would be shorter than $d$, contradicting the choice of $x$ and $y$.
Hence the connection of $z$ necessarily crosses the $x, y$ pair, and hence it crosses either the connection of $x$ or the connection of $y$.
But the distance from $z$ to both $x$ and $y$ is shorter than $d$, which again contradicts the minimality of $d$.

We have proved the claim that  $u\rewrite v$ for some $v<u$
that also satisfies the matching condition. Now, if $v\neq\varepsilon$ the argument can inductively applied on $v$.
By iterating the argument we obtain a sequence $u \rewrite v \rewrite \dots$. The partial order $<$ does not admit infinite decreasing chains
so $\varepsilon$ must be eventually reached.
\qed

\end{proof}

\begin{remark}
\label{rem:argue}

In our setup the boundary $u$ of the tileable
region has all the symmetries of the enlarged rhombus: dihedral group $D_4$ or $D_2$
if the rhombus is or is not a square, respectively. The unique matching also respects these symmetries, and so do all projections $\pi_{a,b}(u)$. Shrinking the remaining tileable
region by adding a unit rhombus tile with two edges on the boundary corresponds to
the application of a rewrite rule on some $\pi_{a,b}(u)$.
The analogous rewrite can be done on
all symmetric positions, thus reducing the tileable region while keeping its symmetries.
By iterating this process we obtain a tiling of the interior of the enlarged rhombus that has all the symmetries of the initial rhombus.
%As discussed in the final point of Section~\ref{subsec:3}, this fact is nice but not necessary for  Theorem~\ref{maintheorem} to hold.

A detail to observe in this process is that
%the boundary consist of four symmetric {\em disjoint\/} segments of unit edges
%that are permuted by the symmetries of the boundary.
each symmetry of the boundary takes any pair of consecutive edges to a disjoint pair, or to the identical pair of consecutive edges,
but never to a pair that shares exactly one edge with the original pair. For this reason
all symmetric positions can be rewritten independently of each other. In contrast, the reader may consider, for example, tiling
the regular octagon of unit sides: the reflection symmetries prevent a fully symmetric tiling of the interior by unit rhombuses.
\end{remark}

Notice that  $u\rewrite^* v$ implies that $\anti{u}\rewrite^* \anti{v}$. This is because for each rewrite rule
$x\rwrule y$ there is also the rewrite
rule $\anti{x}\rwrule\anti{y}$ in our toolbox (\ref{eq:rewrite}).
This also implies that if $u\rewrite^* v$ and $u\rewrite^* v^R$ for some word $v$
then $u\anti{u}$ satisfies the crossing condition. Indeed,
$u\anti{u} \rewrite^* v^R\anti{v} \rewrite^*\varepsilon$,
where the last steps use elimination rewrites.
In particular, if a palindrome can be derived from $u$ then $u\anti{u}$ satisfies the crossing condition.

\begin{example}
Let $u=aaabab$ so that $u\anti{u}$ is
%\anti{a}\anti{a}\anti{a}\anti{b}\anti{a}\anti{b}$ be
the boundary word from Example \ref{ex:boundary23}. Since
$aaabab\rewrite^* baaaab$ we have that
$u\anti{u}\rewrite^* baaaab\ \anti{b}\anti{a}\anti{a}\anti{a}\anti{a}\anti{b}\rewrite^* \varepsilon$.
%As $baaaab$ is a palindrome,
%$u$ satisfies the crossing condition.
Analogously, all ten projections shown in Figure~\ref{fig:projections} can be reduced
into words of the form $p\anti{p}$ for some palindromes $p$, and hence into the empty word $\varepsilon$.
\qed
\end{example}

We are interested in the matching condition on boundary words that are of the type $u\anti{u}$, by performing rewrites on the half word $u$.
As we work with the half boundary, there is yet another useful operation on the words:
If $u=xy$ is a concatenation of two words then we may swap the
order of $x$ and $y$ while changing $x$ into $\anti{x}$. We write $xy\conjugate y\anti{x}$. When such operation is performed on both halves
of $u\anti{u}$, the (circular) word remains unchanged, that is, if $u\conjugate v$ then $u\anti{u}$ and $v\anti{v}$ are the same (circular) words.
Indeed, $u\anti{u}=xy\anti{x}\anti{y}$ and $v\anti{v}=y\anti{x} \anti{y}x$
for $v=y\anti{x}$.

We denote $u\derive v$ if $u\rewrite v$ or $u\conjugate v$, and by $\derive^*$ we denote the reflexive transitive closure of $\derive$.

\begin{lemma}
\label{lem:lemma2}
If $u\derive^* v\rewrite^* v^R$ for some $v$ then $u\anti{u}$ satisfies the crossing condition.
\end{lemma}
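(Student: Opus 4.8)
The plan is to disentangle the two hypotheses $u\derive^* v$ and $v\rewrite^* v^R$ and treat them by separate mechanisms. First I would use $v\rewrite^* v^R$ to show that the circular word $v\anti{v}$ already satisfies the crossing condition; then I would show that each single $\derive$-step preserves this property backwards, and conclude for $u\anti{u}$ by induction on the length of the derivation $u\derive^* v$. Concretely, the main intermediate claim is: \emph{if $u\derive^* w$ and $w\anti{w}$ satisfies the crossing condition, then $u\anti{u}$ satisfies the crossing condition.} Granting this claim, the lemma follows once we know $v\anti{v}$ satisfies the crossing condition, which is exactly where the second hypothesis enters.

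For the base fact, I would invoke the observation recorded just before Lemma~\ref{lem:firstlemma}: if a word $w$ satisfies both $w\rewrite^* z$ and $w\rewrite^* z^R$ for some $z$, then $w\anti{w}$ satisfies the crossing condition. Applying this with $w=v$ and $z=v^R$, the two required derivations are $v\rewrite^* v^R$ (the hypothesis, giving $v\rewrite^* z$) and $v\rewrite^* v=(v^R)^R=z^R$ (trivial by reflexivity). Hence $v\anti{v}$ satisfies the crossing condition, establishing the endpoint of the induction.

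It then suffices to prove the inductive claim for a single step $u\derive w$, splitting into the two kinds of step. If $u\conjugate w$, I would cite the fact already established that $u\anti{u}$ and $w\anti{w}$ are literally the same circular word; since the crossing condition is by definition a property of the circular boundary word, $u\anti{u}$ satisfies it precisely when $w\anti{w}$ does. If instead $u\rewrite w$ by some rule $x\rwrule y$, then applying that rule in the first half and its antiparallel companion rule $\anti{x}\rwrule\anti{y}$ (present in~(\ref{eq:rewrite})) in the second half gives $u\anti{u}\rewrite w\anti{u}\rewrite w\anti{w}$, so $u\anti{u}\rewrite^* w\anti{w}$. By the backward preservation of the crossing condition under $\rewrite^*$ (established before Lemma~\ref{lem:firstlemma}), the satisfaction of the crossing condition by $w\anti{w}$ transfers to $u\anti{u}$. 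Iterating the claim along the finitely many steps of $u\derive^* v$ and using the base fact completes the proof.

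The routine parts are the lifting $u\anti{u}\rewrite^* w\anti{w}$ and the structural induction; the step that requires care is the $\conjugate$ case, since there the half-word genuinely changes while the circular boundary word does not, so the argument must rely on the crossing condition being intrinsic to the circular word rather than on any rewriting. I expect this conjugation case to be the only real subtlety, and it is handled entirely by the already-proved identity that $\conjugate$ fixes $u\anti{u}$ as a circular word.
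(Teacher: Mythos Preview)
Your proposal is correct and follows essentially the same approach as the paper: establish that $v\anti{v}$ satisfies the crossing condition, then pull this back along each $\derive$-step by splitting into the $\rewrite$ case (lift to $u\anti{u}\rewrite^* w\anti{w}$) and the $\conjugate$ case (same circular word). The only cosmetic difference is in the base case: the paper argues directly that $v\anti{v}\rewrite^* v^R\anti{v}\rewrite^*\varepsilon$ and invokes Lemma~\ref{lem:firstlemma}, whereas you instantiate the prior observation with $z=v^R$; these are the same computation.
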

\begin{proof}
As $v\anti{v}\rewrite^*v^R\anti{v}\rewrite^* \varepsilon$, by Lemma~\ref{lem:firstlemma} we know that $v\anti{v}$ satisfies the crossing condition.
For any $x\derive y$, if $y\anti{y}$ satisfies the crossing condition so does $x\anti{x}$. Namely, if $x\rewrite y$ then $x\anti{x}\rewrite y\anti{y}$,
and if $x\conjugate y$ then $x\anti{x}$ and $y\anti{y}$ are the same circular word. Hence, $u\derive^* v$ implies that
$u\anti{u}$ satisfies the crossing condition because $v\anti{v}$ does.
\qed
\end{proof}

All our proofs of tileability use Lemma~\ref{lem:lemma2}. We start with a half $u$ of the boundary word $u\anti{u}$, and derive from $u$
a new word $v$ using operations $\rewrite$ and $\conjugate$. Then we operate on $v$ using only rewrite operations $\rewrite$ to
reach its reversal $v^R$. Of course we could always work on the full boundary $u\anti{u}$ but working on $u$ reduces the size of
expressions and the number of operations by half. We frequently use
$$
xyx^R \derive^* yx^R\anti{x} \derive^* y,
$$
and simply cancel a prefix and a suffix from $u$ if they are reversals of each other.

The following particular case will be used multiple times, so we state it as a separate lemma.

\begin{lemma}
\label{lem:lemma3}
Assume our usual hypothesis that $ab\anti{a}\anti{b}$ is a rhombus in the counterclockwise orientation.
If $u=(ba)^i(b\anti{a})^j$ where $j\geq 1$ then
$u\derive^* v\rewrite^* v^R$ for some $v$. In particular, $u\anti{u}$ satisfies the crossing condition.
\end{lemma}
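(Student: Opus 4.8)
The plan is to prove the stronger statement $u \rewrite^* v \rewrite^* v^R$ using only the plain rewrite rules (no conjugation is actually needed), so that the conclusion then follows immediately from Lemma~\ref{lem:lemma2}. The starting point is to observe that $u$ uses only the three letters $a,b,\anti{a}$, and that among these the swap rules act one-directionally: $ab\rwrule ba$ lets a $b$ move left past an $a$, while $b\anti{a}\rwrule\anti{a}b$ lets an $\anti{a}$ move left past a $b$, and the adjacent pair $a\anti{a}$ (or $\anti{a}a$) may be erased. The overall idea is to cancel all $\min(i,j)$ available $a$/$\anti{a}$ pairs, which leaves a word over a two-letter alphabet, and then to reach its reversal by sorting.

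The main computational ingredient is the claim that $(ba)^k(b\anti{a})^k\rewrite^* b^{2k}$. I would prove this directly rather than by induction: in $(ba)^k(b\anti{a})^k$ all $k$ copies of $a$ precede all $k$ copies of $\anti{a}$, and the only other letter is $b$. Since an $\anti{a}$ can always be slid left past any $b$ via $b\anti{a}\rwrule\anti{a}b$, each $\anti{a}$ can be brought adjacent to an $a$ on its left and erased by $\anti{a}a\rwrule\varepsilon$; as the two counts are equal, every $a$ and every $\anti{a}$ disappears, while the $2k$ occurrences of $b$ are neither created nor destroyed and keep their relative order, yielding $b^{2k}$. Applying this to a contiguous block of $u$ then gives the reduction: if $i\geq j$ then $u=(ba)^{i-j}\bigl[(ba)^{j}(b\anti{a})^{j}\bigr]\rewrite^* (ba)^{i-j}b^{2j}$, and if $i\leq j$ then $u=\bigl[(ba)^{i}(b\anti{a})^{i}\bigr](b\anti{a})^{j-i}\rewrite^* b^{2i}(b\anti{a})^{j-i}$. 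Call the resulting word $v$; when $i=j$ it is the palindrome $b^{2i}$ and we are already done.

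It then remains to check $v\rewrite^* v^R$ in the two asymmetric cases. For $i>j$ we have $v=(ba)^{i-j}b^{2j}$ over $\{a,b\}$, and I would show $v\rewrite^* v^R=b^{2j}(ab)^{i-j}$ just by sliding $b$'s leftward with $ab\rwrule ba$. The clean way to justify this is the elementary fact that, for a word $w$ over $\{a,b\}$, one has $w\rewrite^* w^R$ exactly when every suffix of $w$ contains at least as many $b$'s as the prefix of the same length; this holds for our $v$ precisely because its entire block $b^{2j}$ sits at the right end, making every suffix $b$-heavy. The case $i<j$ is the mirror image: $v=b^{2i}(b\anti{a})^{j-i}$ lies over $\{\anti{a},b\}$, the letter that now slides is $\anti{a}$ (via $b\anti{a}\rwrule\anti{a}b$), and the block $b^{2i}$ sits at the left end so that every suffix is $\anti{a}$-heavy, again giving $v\rewrite^* v^R$.

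I expect the main obstacle to be exactly this last reachability check, because the swap rules are one-directional: a two-letter word can be turned into its reversal only when $v^R$ is already ``more sorted'' than $v$, and one must verify that our $v$ is arranged so that this is so. This is also where the hypothesis $j\geq 1$ enters, since it guarantees a nonempty terminal (respectively initial) block of $b$'s in $v$, which is precisely what supplies the room to sort; for $j=0$ the word $(ba)^{i}$ cannot be reduced to $(ab)^{i}$ and the argument would break down. Once $u\rewrite^* v\rewrite^* v^R$ is in hand, Lemma~\ref{lem:lemma2} yields that $u\anti{u}$ satisfies the crossing condition.
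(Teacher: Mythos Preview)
Your proof is correct and follows essentially the same approach as the paper: split into the cases $i\geq j$ and $i<j$, cancel the $\min(i,j)$ pairs of $a$ and $\anti{a}$ in the central block $(ba)^{\min(i,j)}(b\anti{a})^{\min(i,j)}$ to obtain a two-letter word $v$, and then verify $v\rewrite^* v^R$ by the one-directional swap rules. The paper presents the same two reductions $(ba)^{i-j}b^{2j}\rewrite^* b^{2j}(ab)^{i-j}$ and $b^{2i}(b\anti{a})^{j-i}\rewrite^* (\anti{a}b)^{j-i}b^{2i}$, phrased as moving a block of $b$'s across; your suffix-versus-prefix $b$-count criterion is a slightly more explicit justification of the same step, and your remark that the hypothesis $j\geq1$ is precisely what makes the first of these reductions go through (positions of $a$ shift from $2k$ to $2j+2k-1$) is accurate.
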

\begin{proof}
We apply the rewrite rules in (\ref{eq:rewrite}) that move $a$ to the right and $\anti{a}$ to the left over $b$'s,
and cancel $a$ and $\anti{a}$  as they meet.

If $i< j$ then
$$
(ba)^i\ (b\anti{a})^j \derive^* b^ia^i\ \anti{a}^ib^i\ (b\anti{a})^{j-i}
\derive^* b^{2i}\ (b\anti{a})^{j-i} \rewrite^* (\anti{a}b)^{j-i}\ b^{2i},
$$
and the last two words are reverses of each other. In the  last steps we moved $2i+1$ letters $b$ from the beginning
of the word to the end.
If $i\geq j\geq 1$,
$$
(ba)^i(b\anti{a})^j \derive^* (ba)^{i-j}\ b^{j}a^{j}\ \anti{a}^{j}b^j  \derive^* (ba)^{i-j}b^{2j} \rewrite^* b^{2j}(ab)^{i-j},
$$
where the last steps of the derivation
move $2j-1$ copies of $b$ from the end of the word to the beginning.
Also here the last words are reverses of each other.
Lemma~\ref{lem:lemma2} confirms that  $u\anti{u}$ satisfies the crossing condition.

\qed
\end{proof}

\subsection{Boundary words of super-rhombuses}
\label{sec:boundaryword}

\begin{figure}[hptb]
\begin{center}
 \includegraphics[width=0.5\textwidth]{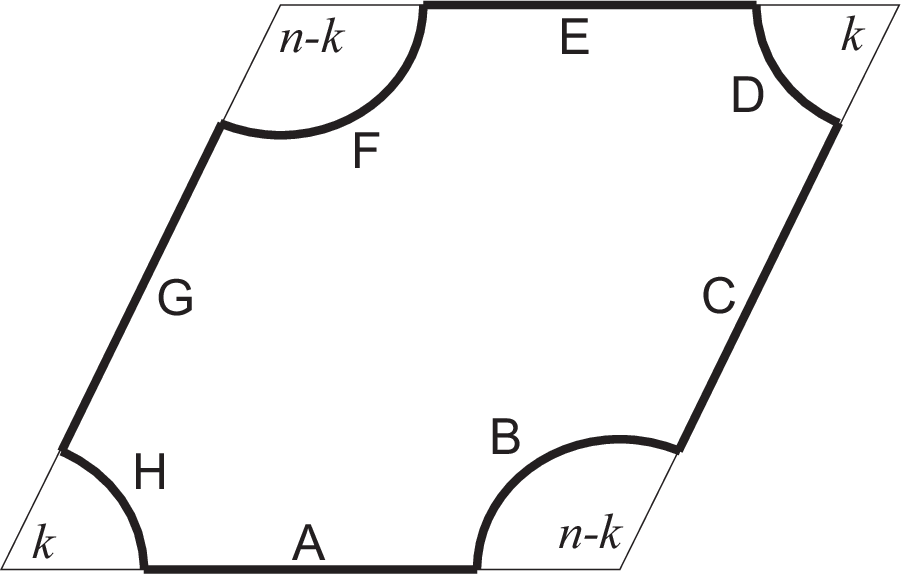}
 \end{center}
\caption{The eight segments of the boundary.}
\label{fig:boundarysegments}
\end{figure}

Let us fix
$k\in\{1,2,\dots,n-1\}$ and consider the super-rhombus $S$ of type $(k,n-k)$. Let us orient $S$ on the plane with its bottom edge horizontal and corner $k$ at the left end
of the base. Super-rhombuses $(k,n-k)$ and $(n-k,k)$ are identical, so it is sufficient to consider
only one of them for each $k$.
We name the rose and edge segments of the boundary of $S$ by letters $A$ through $H$, as shown in Figure~\ref{fig:boundarysegments}.
Segments $A, C, E$ and $G$ are {\em edge segments\/} and  $B, D, F$ and $H$ are {\em rose segments}. In the counterclockwise orientation, the directions of edges $A$ and $C$ are $0$  and $k$, respectively.

Recall that, for odd $n$, the edge segments bisect the sequence
$$
\alp{n}=1\ \vline\ 31\ \vline\ 531\ \vline\ \dots \
\vline\ (n-4)(n-6)\dots 31\ \vline\, \vline\
13\dots (n-6)(n-4)\ \vline\  \dots \ \vline\
135\ \vline\ 13\ \vline\ 1
$$
of unit rhombuses, where shape $(m,n-m)$ is represented simply as $m$, and the vertical lines are added to emphasize the structure of the
sequence. When $n$ is even, the sequence of bisected unit rhombuses is
$$
\alp{n}=0\ \vline\ 20\ \vline\ 420\ \vline\ \dots \
\vline\ (n-4)(n-6)\dots 20 \ \vline\, \vline\ 02 \dots (n-6)(n-4)\ \vline\  \dots \ \vline\ 024\ \vline\ 02\ \vline\ 0
$$
where $0$ denotes a single unit edge along the edge segment. The sequences are the non-underlined parts of $\Sigma(n)$, as shown in
Tables~\ref{tab:composition} and \ref{tab:composition2}.
Remark that the sequences are palindromes, and that
all numbers $m$ in the sequences have the same parity as $n$.
Let
$$
f(m)=\frac{n-m}{2}-1,
$$
so that $m$  appears $f(m)$ times in both halves of $\alp{n}$.

Observe the following structure of $\alp{n}$:
let $k,m\in\{1,3,\dots ,n-2\}$ or $k,m\in\{0,2,\dots ,n-2\}$ when $n$ is odd or even, respectively, and $k<m$.
Then the projected sequence obtained by erasing all other numbers from $\alp{n}$ has the form
\begin{equation}
\label{eq:edgesegment}
k^i (m k)^j (k m)^j k^i, \mbox{ where $i=f(k)-f(m)\geq 1$ and $j=f(m)\geq 0$.}
\end{equation}

On the horizontal segment $A$, each occurrence of $m$ in $\alp{n}$  contributes symbols $\frac{m}{2},\frac{-m}{2}$ on the boundary word, when  $m\neq 0$. This is because $A$ is in direction $0$, so that
each $m$ represents a unit rhombus $(m,n-m)$ whose edges are
in directions $\frac{m}{2}$ and $\frac{-m}{2}$.
Each occurrence of $m=0$ contributes a single $0$ on the boundary word.
When $n$ is even, all symbols on the boundary word are integers; when $n$ is odd they are
from the set $\Z+\frac{1}{2}$. This is easily seen to be the case on all eight
segments in Figure~\ref{fig:boundarysegments}, including both the edge and the rose segments.
We have that in the case of odd $n$
the directions of the unit vectors present on the boundary are not parallel to any edge of $S$.
In contrast, on even $n$, each occurrence of $0$ in $\alp{n}$ yields a unit vector parallel to an edge.
In both even and odd cases, directions $\frac{\pm n}{2}$ perpendicular to $A$ are possible.

For any direction $x$, where $x\in\Z$ or $x\in \Z+\frac{1}{2}$ if $n$ is even or odd, respectively,
we introduce the notation $\diam{x}=m$ if the unit rhombus $(m,n-m)$ on segment $A$
contributes direction $x$ on the boundary word. More precisely,
$\diam{x}=m$ for all
directions $x$ such that $\pm 2x \equiv m \pmod{2n}$, and $0\leq m\leq n$.
We also define
$$
\s{x}=
\left\{
\begin{array}{lll}
x, & \mbox{ if $\pm 2x \equiv \diam{x}$} & \mbox{ $\pmod{4n}$, and}\\
\anti{x}, & \mbox{ if $\pm 2x \equiv \diam{x}+2n$} & \mbox{ $\pmod{4n}$},
\end{array}
\right.
$$
so that, among the two orientations $x$ and $\anti{x}$, the actual
contribution by rhombus $(\diam{x}, n-\diam{x})$ on the boundary word along segment $A$  is $\s{x}$.

Let us discuss next briefly the rose segments. The four
segments combined in the order $H, F, D, B$ form the full rose $R_2^1$ in the clockwise orientation.
Each unit vector direction appears twice in the rose. In $R_2$ the directions would appear perfectly
ordered in the increasing order, but omitting the outermost ring in $R_2^1$ swaps consecutive pairs
of directions. In segment $B$, directions $\frac{n}{2}$ and $k-\frac{n}{2}$ both appear once (as the second,
and the second last direction, respectively), while the directions $x$ in the interval
$k-\frac{n}{2} < x < \frac{n}{2}$ appear twice in $B$.

Let $a,b\in\Z$ (even $n$) or $a,b\in\Z+\frac{1}{2}$ (odd $n$)
be two distinct directions of unit vectors on the boundary word.
We denote by $\alpha(a,b)$, $\beta(a,b)$, $\gamma(a,b)$ and $\delta(a,b)$ the
$\pi_{a,b}$ projections of the boundary word segments on $A$, $B$, $C$ and $D$, respectively.
We denote their concatenation by $u$:
$$
u=\alpha(a,b)\beta(a,b)\gamma(a,b)\delta(a,b).
$$
The $\pi_{a,b}$ projection of the entire boundary is $u\anti{u}$.

It is easy to see that $\alpha(a,b)$ depends on the directions $a$ and $b$ as follows:
\begin{itemize}
\item If $2a\equiv n \pmod{2n}$ so that % $\diam{a}=n$ and
$a$ is orthogonal to segment $A$, then direction $a$ does not appear in $\alpha(a,b)$ and hence
$$
\alpha(a,b) = \s{b}^i,
$$
where $i=2f(\diam{b})=n-\diam{b}-2$.
Analogously, if $2b\equiv n \pmod{2n}$  then $\alpha(a,b)$ does not contain direction $b$.
\item Otherwise $\diam{a}, \diam{b} < n$, so we can infer $\alpha(a,b)$ from (\ref{eq:edgesegment}).
 If $\diam{a}<\diam{b}$ then
%$$
%\alpha(a,b)=\s{a}^{\frac{\diam{b}-\diam{a}}{2}}(\s{b}\s{a})^{f(\diam{b})} (\s{a}\s{b})^{f(\diam{b})}\s{a}^{\frac{\diam{b}-\diam{a}}{2}}.
%$$
$$
\alpha(a,b)=\s{a}^{f(\diam{a})-f(\diam{b})}(\s{b}\s{a})^{f(\diam{b})} (\s{a}\s{b})^{f(\diam{b})}\s{a}^{f(\diam{a})-f(\diam{b})}.
$$
Notice that this word is a palindrome.
Analogously, if  $\diam{b}<\diam{a}$ then
$$
\alpha(a,b)=\s{b}^{f(\diam{b})-f(\diam{a})}(\s{a}\s{b})^{f(\diam{a})} (\s{b}\s{a})^{f(\diam{a})}\s{b}^{f(\diam{b})-f(\diam{a})}.
$$
The last possibility is $\diam{a}=\diam{b}$. Now
$$\alpha(a,b)=(\s{a}\s{b})^{2f(\diam{a})} \mbox{ or } \alpha(a,b)=(\s{b}\s{a})^{2f(\diam{a})}.$$
%and if $\w{a}=R$ and $\w{b}=L$ then
%$$\alpha(a,b)=(\s{b}\s{a})^{2f(\diam{a})}.$$
\end{itemize}

The word $\gamma(a,b)$ along segment $C$ is similar. Because the direction of $C$ is $k$, directions
$a$ and $b$ are oriented with respect to $C$ in the same way as $a-k$ and $b-k$ are oriented with respect to $A$.
We then have
$$
\gamma(a,b)=\sigma_k(\alpha(a-k,b-k)).
$$

\subsection{Case analysis}
\label{sec:caseanalysis}

We are interested to analyze the word $u=\alpha(a,b)\beta(a,b)\gamma(a,b)\delta(a,b)$ which is the first half of the projection of
the boundary word on directions $a$ and $b$, and to show that
$u\anti{u}$ satisfies the crossing condition.
There are a number of cases to analyze depending on the relationship between directions $a,b$ and $k$.
%Figure~\ref{fig:oddcases} summarizes the different possibilities.
Note that we can reduce the number of cases due to symmetries. Because $a$ and $\anti{a}$, and $b$ and $\anti{b}$
define the same projections, we only need to consider one choice of each. We can also swap $a$ and $b$ if needed.
We can therefore assume that
$$-\frac{n}{2} < a< b \leq \frac{n}{2}.$$
With this choice, $ab\anti{a}\anti{b}$ is a proper rhombus oriented counterclockwise, and the rewrite rules in
(\ref{eq:rewrite}) can be used.

\bigskip

\noindent
{\bf Case 1: $a$ or $b$ is perpendicular to a side of super-rhombus $S$}
\medskip

By suitably orienting $S$, and possibly replacing $b$ by $\anti{b}$ or $a$ by $\anti{a}$, we can assume that $b=\frac{n}{2}$
and $-\frac{n}{2}<a<\frac{n}{2}$. We can further assume that $k\leq \frac{n}{2}$. (If not, we flip the rhombus
and consider $n-k$ instead of $k$.)
In this case we have $\alpha(a,b)=a^i$ for some i.

\medskip

\noindent
{\bf (a)} Assume $a>k-\frac{n}{2}$. Now $\beta(a,b)\in\{baa, aba\}$ and $\delta(a,b)=b$. In any case $\beta(a,b)\rewrite^*baa$.
Word $\gamma(a,b)$ is either some palindrome $p$ containing only letters $a$ and $b$ (if $\diam{a-k}\neq \diam{b-k}$), or $\gamma(a,b)=(ba)^j$ for some
$j$ (if $\diam{a-k}=\diam{b-k}$). In the first case
$$
u\derive^* a^i\ baa\ p\ b \derive^* b\ a^{i+2}\ p\ b\rewrite^* b\ p\ a^{i+2}\ b.
$$
The last two words are reversals of each other, so by Lemma~\ref{lem:lemma2} we know that the boundary word $u\anti{u}$ satisfies the crossing condition.
In the second case,
$$
u\derive^* a^i\ baa\ (ba)^j\ b \derive^* b\ a^{i+1}\ a(ba)^j\ b \rewrite^* b\ a(ba)^j\ a^{i+1}\ b.
$$
Again, the last two derived words are reversals of each other.

\medskip

\noindent
{\bf (b)} Next we assume that $a=k-\frac{n}{2}$. Now $\beta(a,b)=ba$, $\gamma(a,b)=b^j$ for some $j$, and $\delta(a,b)=\anti{a}b$, so that
$$
u=a^i\ ba\ b^j\ \anti{a}b \derive^* a^i\ b^{j+1}\ a\anti{a}\ b \derive a^i b^{j+2} \rewrite^* b^{j+2} a^i.
$$
The last two words are reversals of each other.

\medskip

\noindent
{\bf (c)} If  $-\frac{n}{2}<a<k-\frac{n}{2}$ then  $\beta(a,b)=b$ and $\delta(a,b)\in\{\anti{a}\anti{a}b,\anti{a}b\anti{a}\}$.
As $\diam{a-k}\neq\diam{b-k}$, we know that $\gamma(a,b)$ is some palindrome $p$ that contains letters $\anti{a}$ and $b$ only.
%As $\diam{a-k}>\diam{b-k}$, we have
%$\gamma(a,b)=b^{s}(\anti{a}b)^t(b\anti{a})^tb^s$ for some $s,t$.
$$
u \derive^* a^i\ b\ p\ \anti{a}\anti{a}b \derive^* b\ a^i\ p\ \anti{a}\anti{a}b \derive^* a^i\ p\ \anti{a}\anti{a}
\derive^* p\ \anti{a}^{i+2}\rewrite^* \anti{a}^{i+2}\ p.
$$
%$$
%\begin{array}{rcl}
%u &\rewrite^*& a^i\ b\ b^{s}(\anti{a}b)^t(b\anti{a})^tb^s\ \anti{a}\anti{a}b \\
%&\conjugate&
%(\anti{a}b)^t(b\anti{a})^tb^s\ \anti{a}\anti{a}b\ \anti{a}^i\ \anti{b}\ \anti{b}^{s}\\
%&\rewrite^* &
%(\anti{a}b)^t(b\anti{a})^tb^s\ \anti{a}\anti{a} b\ \anti{b}\ \anti{b}^{s}\ \anti{a}^i\\
%&\rewrite^* &
%(\anti{a}b)^t(b\anti{a})^tb^s\ \anti{b}^{s}\ \anti{a}^{i+2}\\
%&\rewrite^* &
%(\anti{a}b)^t(b\anti{a})^t\ \anti{a}^{i+2}\\
%&\rewrite^* &
%\anti{a}^{i+2}(\anti{a}b)^t(b\anti{a})^t.
%\end{array}
%$$
Also here, the last two words are reversals of each other.

\medskip

\noindent
In the remaining cases we can now assume that $a$ and $b$ are not perpendicular to the sides of $S$.

\bigskip

\noindent
{\bf Case 2: $\diam{a}=\diam{b}$}
\medskip

Let us now assume that directions $a$ and $b$ are from the same unit rhombus on some edge segment.
In a suitable orientation of $S$ this means that
$a=-b$. By symmetries, we can also assume that $0 < b<\frac{n}{2}$ and $k\leq \frac{n}{2}$.
Now $\alpha(a,b)=(ba)^i$ for $i=2f(\diam{b})=n-2b-2$.

\medskip

\noindent
{\bf (a)} Assume $a>k-\frac{n}{2}$. We have $\beta(a,b)\in\{bbaa, baba\}$ and $\delta(a,b)=\varepsilon$, the empty word.
Because $\diam{a-k}=2|a-k|=2k+2b$ and $\diam{b-k}=2|b-k|$, it follows that $\diam{a-k}-\diam{b-k}$ is $4b$ or $4k$, depending on
whether $b<k$ or $b>k$, respectively. In any case, $\diam{a-k}>\diam{b-k}$, so we have from (\ref{eq:edgesegment}) that
$\gamma(a,b)=b^s(ab)^t(ba)^tb^s$  for some $s,t$. In fact, $s=\frac{1}{2}(\diam{a-k}-\diam{b-k})=\min\{2b,2k\}$.

If $s\geq 2$ then
$$
\begin{array}{rcl}
u &\derive^*& (ba)^i\ bbaa\ b^{s}(ab)^t(ba)^tb^s\\
& \derive^* &
b^s(ab)^{i+2}\ (ab)^t(ba)^tb^s \\
&
\derive^*
&
(ab)^{i+2}\\
&\rewrite^*&
(ba)^{i+2}.
\end{array}
$$
In the third step we eliminated $b^s(ab)^t$ and its reversal from the prefix and the suffix of the word, respectively.
The last words are reversals of each other so
Lemma~\ref{lem:lemma2} applies.

Suppose then $s<2$. Because $s=\min\{2b,2k\}$, we see that $s<2$ happens
%An easy calculation shows that
%$$
%s=\frac{1}{2}(\diam{a-k}-\diam{b-k})
%=\left\{
%\begin{array}{ll}
%2b, & \mbox{ if $b<k$, and}\\
%2k, & \mbox{ if $b>k$.}
%\end{array}
%\right.
%$$
%So $s<2$ happens
if and only if $b=\frac{1}{2}$, $a=-\frac{1}{2}$. But in this case, the rose segment $B$ contains a unit vector
in direction $a$ between two unit vectors in direction $b$, so that $\beta(a,b)=baba$. (This is, interestingly, the reason why the
rose segments come from the rose $R_2^1$ instead of the full rose $R_2$: in the full rose $R_2$ we would have $bbaa$ instead of $baba$,
which would make the crossing condition fail in the case $b=\frac{1}{2}$, $a=-\frac{1}{2}$.) Now we have $s=1$ and
$$
u=(ba)^i\ baba\ b(ab)^t(ba)^tb = (ba)^{i+t+2}\ bb\ (ab)^t
\derive^* (ba)^{i+2}\ bb
 \rewrite^*  bb\ (ab)^{i+2},
$$
where the last two derived words are reverses of each other.

\medskip

\noindent
{\bf (b)} The other possibility is that $-\frac{n}{2}<a<k-\frac{n}{2}$. (Note that
the case $a=k-\frac{n}{2}$ is covered by Case 1 because then $a$ is perpendicular to segment $C$ of $S$.)
Now $\beta(a,b)=bb$ and $\delta(a,b)=\anti{a}\anti{a}$. Also we can calculate
$\diam{a-k}=2(a-k+n)=2(n-k-b)$ and $\diam{b-k}=2|b-k|$ so that
$\diam{a-k}-\diam{b-k} = 2\min\{n-2b,n-2k\}\geq 0$.

Assume first that  $k=\frac{n}{2}$ so that
$\diam{a-k}=\diam{b-k}$.
We have $\gamma(a,b)=(\anti{a}b)^j$ for some $j$, so
$$
\begin{array}{rcl}
u &=&(ba)^i\ bb\ (\anti{a}b)^j\ \anti{a}\anti{a}\\
& \derive^* &
(ba)^i\ (b\anti{a})^{j+2}
\end{array}
$$
The obtained word is of the form covered by
Lemma~\ref{lem:lemma3}.

Assume then that $k<\frac{n}{2}$ so that $\diam{a-k} >\diam{b-k}$. Then
$\gamma(a,b)=b^s(\anti{a}b)^t(b\anti{a})^t b^s$  for some $s,t$. We derive
$$
\begin{array}{rcl}
u & = &  (ba)^i\ bb\ b^s(\anti{a}b)^t\ (b\anti{a})^tb^s\ \anti{a}\anti{a} \\
&\derive^*&  b^s\ (ba)^i\ bb\  (\anti{a}b)^t\ (b\anti{a})^t\ \anti{a}\anti{a}\ b^s\\
&\derive^*&  (ba)^i\ (b\anti{a})^{2t+2}.
\end{array}
$$
Lemma~\ref{lem:lemma3} applies to the derived word.
\medskip

\noindent
From now on we can assume that neither Case 1 nor Case 2 applies.

\bigskip

\noindent
{\bf Case 3: Directions $a$ and $b$ appear in the same rose segment $B$}

\medskip

Now $k-\frac{n}{2}<a<b<\frac{n}{2}$ and $0<k<n$. (Note that we allow $k>\frac{n}{2}$ so that we
do not assume the angle between segments $A$ and $B$ to be obtuse.)
We can also assume that $b>0$ because otherwise we can reflect the super-rhombus to swap segments $A$ and $C$.

In this setup we have $\beta(a,b)\in\{bbaa, baba\}$ and $\delta(a,b)=\varepsilon$. Moreover,
$\alpha(a,b)$ and $\gamma(a,b)$ are palindromes containing only letters $a$ and $b$,
and $\diam{a}=2|a|$, $\diam{b}=2|b|=2b$, $\diam{a-k}=2|a-k|$ and $\diam{b-k}=2|b-k|$.
Because Case 2 does not apply, we have
$\diam{a}\neq \diam{b}$ and $\diam{a-k}\neq \diam{b-k}$.

\medskip

\noindent
{\bf (a)} Assume first that $\diam{a} >\diam{b}$. Then $|a|>b>0$ but $a<b$, which clearly implies that $a<0$.
Then also
$$
\diam{a-k} = 2|a-k| = 2(|a|+k) > 2(b+k) \geq 2|b-k| = \diam{b-k}.
$$
Now
$\alpha(a,b)=b^i(ab)^j(ba)^jb^i$,  and
 $\gamma(a,b)=b^s(ab)^t(ba)^tb^s$ for some $i,s\geq 1$ and $j,t\geq 0$.
 More precisely,
 $$
 \begin{array}{rcl}
 s&=&f(\diam{b-k})-f(\diam{a-k})=|a-k|-|b-k|= k-a-|b-k|,\\
 i &=&f(\diam{b})-f(\diam{a})=|a|-|b|=-a-b.
 \end{array}
 $$
In particular,
$s-i=k+b-|b-k|> 0$ so that $s> i\geq 1$.
Also, because $\diam{a-k}=-2(a-k)>-2a=\diam{a}$, we have that $t<j$.
We derive
$$
\begin{array}{rcl}
u &\derive^* & b^i(ab)^j(ba)^jb^i\ bbaa\ b^s(ab)^t(ba)^tb^s\\
&\derive^* &
(ab)^j(ba)^jb^{i+2}\ abab\ b^{s-2}(ab)^t(ba)^tb^{s-i}\\
&\derive^* &
(ab)^j(ba)^jb^{2s}\ abab\  (ab)^t(ba)^t\\
&\derive^* &
(ab)^{j-t}(ba)^jb^{2s}\ (ab)^{t+2}\\
&\derive^* &
(ba)^{2j-t}b^{2s}\ (ab)^{t+2}\\
&\derive^* &
(ba)^{2j-2t-2}b^{2s}\\
&\rewrite^* &
b^{2s}(ab)^{2j-2t-2}.
\end{array}
$$
The last two words are reversals of each other.

\medskip

{\bf (b)} Assume next that $\diam{a} <\diam{b}$ and $\diam{a-k}<\diam{b-k}$.
Then $k-b<k-a\leq |k-a| < |k-b|$  so that $k-b<0$, that is, $b>k$.

Now $\alpha(a,b)=a^i(ba)^j(ab)^ja^i$,
 for $i=f(\diam{a})-f(\diam{b})$ and $j=f(\diam{b})$,
 and $\gamma(a,b)=a^s(ba)^t(ab)^ta^s$, for $s=f(\diam{a-k})-f(\diam{b-k})$ and $t=f(\diam{b-k})$.
 We have
 $$
 \begin{array}{rcl}
 t-j &=& |b|-|b-k| = b-(b-k)=k>0, \mbox{ and }\\
 i-s &=& |b|-|a|-(|b-k|-|a-k|)\\ &=& b-(b-k)+|a-k|-|a| \geq k + (|a|-k)-|a| =0,
 \end{array}
 $$
so that $t>j$ and $i\geq s\geq 1$.% Then also $2i-s\geq 1$.

Assume first that $i\geq 2$. Then
$$
\begin{array}{rcl}
u &\derive^*& a^i(ba)^j(ab)^ja^i\ bbaa\ a^s(ba)^t(ab)^ta^s\\
&\derive^*&
a^{i-s}(ba)^j(ab)^ja^i\ bbaa\ a^s(ba)^t(ab)^t\\
&\derive^*&
(ba)^j(ab)^j\ a^{2i-s}\ bbaa\ a^s(ba)^t(ab)^t\\
&\derive^*&
(ab)^{j+2}\ a^{2i} (ba)^t(ab)^{t-j}\\
&\derive^*&
(ab)^{j+2}\ a^{2i}\ (ba)^{2t-j}\\
&\derive^*&
a^{2i}\ (ba)^{2t-2j-2}\\
&\rewrite^*&
(ab)^{2t-2j-2} a^{2i}.
\end{array}
$$
Note that we used the fact that $2i-s\geq 2$ on the fourth derivation line to change $aabb$ into $abab$.
The last two words are reversals of each other.

Consider then the case $i=1$, so that also $s=1$.
 But $i=b-|a|=1$ and $s=b-k-|a-k|=1$ happen simultaneously only if
 $b=a+1$. In this case, $\beta(a,b)=baba$
as the path follows rose $R_2^1$ instead of
 rose $R_2$. We derive
$$
\begin{array}{rcl}
u &=& a(ba)^j(ab)^ja\ baba\ a(ba)^t(ab)^ta\\
&\derive^*&
(ab)^{j+2}\ aa\ (ba)^t(ab)^{t-j}\\
&\derive^*&
(ab)^{j+2}\ aa\ (ba)^{2t-j}\\
&\derive^*&
aa\ (ba)^{2t-2j-2}\\
&\rewrite^*&
(ba)^{2t-2j-2}\ aa,\\
\end{array}
$$
obtaining a word and its reversal. Note that also here it was essential that rose $R_2^1$ was used instead of
rose $R_2$.

\medskip

\noindent
{\bf (c)} The last possibility is that $\diam{b} >\diam{a}$ and $\diam{b-k}<\diam{a-k}$. Then
$\alpha(a,b)=a^i(ba)^j(ab)^ja^i$  and
 $\gamma(a,b)=b^s(ab)^t(ba)^tb^s$ for some $i,s\geq 1$ and $j,t\geq 0$. We have
$$
\begin{array}{rcl}
u &\derive^* & a^i(ba)^j(ab)^ja^i\ bbaa\ b^s(ab)^t(ba)^tb^s\\
&\derive^* &
a^{i-1}(ab)^{2j}a^{i-1}\ aa\ bbaa\ bbb^{s-1}(ab)^{2t}b^{s-1}\\
&\derive^* &
a^{i-1}b^{s-1}(ab)^{2j}aba\ baba\ b(ab)^{2t}b^{s-1} a^{i-1}\ \\
&\derive^*&
(ab)^{2j+2t+4} \\
&\rewrite^* &
(ba)^{2j+2t+4}.
\end{array}
$$
The last two words are reversals of each other.
\medskip

\bigskip

\noindent
{\bf Case 4: None of the cases 1--3 apply}

\medskip

\noindent
Because Case 3 does not apply, directions $a$ and $b$ do not appear on the same rose segment; because Case 1 does not apply
the directions appear on unique rose segments, with the antiparallel directions then appearing on the opposite rose segments.
We can orient rhombus $S$ so that $b$ appears in segment $B$ and $\anti{a}$ in segment $D$. Then
$\beta(a,b)=bb$ and $\delta(a,b)=\anti{a}\anti{a}$.
Now $\alpha(a,b)$ and $\gamma(a,b)$ are palindromes containing only letters $a$ and $b$,
and only letters $\anti{a}$ and $b$, respectively. Then
$$w=bb\ \gamma(a,b)\ \anti{a}\anti{a} = \beta(a,b)\gamma(a,b)\delta(a,b)$$
contains only letters $\anti{a}$ and $b$.

Depending on whether $\diam{a}>\diam{b}$ or $\diam{a}<\diam{b}$ we have either
$\alpha(a,b)=b^i(ab)^j(ba)^jb^i$ or
$\alpha(a,b)=a^i(ba)^j(ab)^ja^i$, for some $i\geq 1$ and $j\geq 0$.
In the first case,
$$
\begin{array}{rcl}
u & = & b^i(ab)^j(ba)^jb^i\ w\\
&\derive^* & b^i (ba)^{2j}\ w\ b^i\\
&\derive^* & (ba)^{2j}\ w,
\end{array}
$$
and also in the second case
$$
\begin{array}{rcl}
u & = & a^i(ba)^j(ab)^ja^i\ w\\
&\derive^* & (ba)^{2j}a^i\ w\ \anti{a}^i\\
&\derive^* & (ba)^{2j}a^i\anti{a}^i\ w\\
&\derive^* & (ba)^{2j}\ w.\\
\end{array}
$$
We see that in any case, $u\derive^*(ba)^{2j}\ bb\ \gamma(a,b)\ \anti{a}\anti{a}$.

Analogously, depending on whether
$\diam{a-k}>\diam{b-k}$ or $\diam{a-k}<\diam{b-k}$ we have either
$\gamma(a,b)=b^s(\anti{a}b)^t(b\anti{a})^tb^s$ or
$\gamma(a,b)=\anti{a}^s(b\anti{a})^t(\anti{a}b)^t\anti{a}^s$, for some $s\geq 1$ and $t\geq 0$.
In the first case,
$$
\begin{array}{rcl}
u &\derive^* & (ba)^{2j}\ bb\ b^s(\anti{a}b)^t(b\anti{a})^tb^s\ \anti{a}\anti{a} \\
&\derive^* & b^s\ (ba)^{2j}\ (b\anti{a})^{2t+2}\ b^s\\
&\derive^* & (ba)^{2j}\ (b\anti{a})^{2t+2}.
\end{array}
$$
According to Lemma~\ref{lem:lemma3}, the crossing condition is satisfied.
In the second case,
$$
\begin{array}{rcl}
u &\derive^* & (ba)^{2j}\ bb\ \anti{a}^s(b\anti{a})^t(\anti{a}b)^t\anti{a}^s\ \anti{a}\anti{a} \\
&\derive^* & (ba)^{2j}\ \anti{a}^s\ (b\anti{a})^{2t+2}\ \anti{a}^s\\
&\derive^* & a^s\ (ba)^{2j}\ \anti{a}^s\ (b\anti{a})^{2t+2}\\
&\derive^* & (ba)^{2j}\ a^s\ \anti{a}^s\ (b\anti{a})^{2t+2}\\
&\derive^* & (ba)^{2j}\ (b\anti{a})^{2t+2},
\end{array}
$$
so again Lemma~\ref{lem:lemma3} applies.

\bigskip

This completes the case analysis. All possible cases were covered, and the crossing condition was confirmed in each case.
 This completes the proof of Theorem~\ref{maintheorem}.
\qed

\section{Conclusions}

We have demonstrated primitive substitutions on unit rhombuses that generate substitution tilings with $2n$-fold rotational symmetry for all $n$. The obtained tilings are uniformly recurrent, \emph{i.e}., quasiperiodic.  The proof was based on
a rewrite system on the proposed boundaries of the enlarged rhombuses to check that the interior can be properly tiled.

\begin{acknowledgements}
We would like to thank Henna Helander for editing the images and Reino Niskanen for many helpful comments and suggestions.
\end{acknowledgements}

% BibTeX users please use one of
%\bibliographystyle{spbasic}      % basic style, author-year citations
%\bibliographystyle{spmpsci}      % mathematics and physical sciences
%\bibliographystyle{spphys}       % APS-like style for physics
%\bibliography{}   % name your BibTeX data base

% Non-BibTeX users please use
%\begin{thebibliography}{}
%
% and use \bibitem to create references. Consult the Instructions
% for authors for reference list style.
%
%\bibitem{RefJ}
% Format for Journal Reference
%Author, Article title, Journal, Volume, page numbers (year)
% Format for books
%\bibitem{RefB}
%Author, Book title, page numbers. Publisher, place (year)
% etc
%\end{thebibliography}

\bibliographystyle{spmpsci}
\bibliography{subrosa}

\end{document}